\newcommand{\cahiernumber}{04}  % Insert your Cahier du GERAD number.
\def\cahierline{\gdef\@cahierline}
\def\ps@firstpage{%\ps@plain
  \def\@oddhead{\hfil\normalfont\small\@cahierline}%
  \let\@evenhead\@oddhead % in case an article starts on a left-hand page
}
\def\ps@myheadings{%
  \def\@oddfoot{\hfil\normalfont\small\color{gray}{\@cahierline}}%
  \let\@evenfoot\@oddfoot % in case an article starts on a left-hand page
  \def\@evenhead{\rlap{\thepage}\hfil\upshape\footnotesize\leftmark\hfil}%          %!
  \def\@oddhead{\hbox{}\hfil{\upshape\footnotesize\rightmark}\hfil\llap{\thepage}}% %!
  \let\@mkboth\@gobbletwo
  \let\sectionmark\@gobble
  \let\subsectionmark\@gobble
}
\newcommand{\mailto}[1]{E-mail:~\href{mailto:#1}{\texttt{#1}}}
\numberwithin{equation}{section}
\def\listtodoname{Todo List}
\def\listoftodos{%
  \section*{\listtodoname}\mbox{~}\par
  \@starttoc{tdo}
  }
\renewcommand\tableofcontents{%
  \section*{\contentsname}\mbox{~}\par
    \@starttoc{toc}
    }
\newcommand*\l@section[2]{%
  \ifnum \c@tocdepth >\z@
    \setlength\@tempdima{1.5em}%
    \addpenalty\@secpenalty
    \begingroup
      \parindent \z@ \rightskip \@pnumwidth
      \parfillskip -\@pnumwidth
      \leavevmode \bfseries
      \advance\leftskip\@tempdima
      \hskip -\leftskip
      #1\nobreak\hfil \nobreak\hb@xt@\@pnumwidth{\hss #2}\par
    \endgroup
  \fi}
\newcommand*\l@subsection{\@dottedtocline{2}{1.5em}{2.3em}}
\newcommand*\l@subsubsection{\@dottedtocline{3}{3.8em}{3.2em}}
\newcommand*\l@paragraph{\@dottedtocline{4}{7.0em}{4.1em}}
\newcommand*\l@subparagraph{\@dottedtocline{5}{10em}{5em}}
\newcommand{\R}{\mathbb{R}}
\newcommand{\minim}{\mathop{\mathrm{minimize}}}
\newcommand{\minimize}[1]{\displaystyle\minim_{#1}}
\newcommand{\st}{\mathop{\mathrm{subject\ to}}}
\newenvironment{btheorem}{%
   \setlength{\fboxsep}{4pt}%
   \par\hbox{}\noindent%
   \begin{boxedminipage}{\textwidth}
     \begin{theorem}}
    {\end{theorem}
    \end{boxedminipage}
    %\vspace{0.5\baselineskip}
    }
\crefname{subsection}{section}{sections}
\title{Implementing a smooth exact penalty function
    \\ for equality-constrained nonlinear optimization%
       \thanks{January 10, 2019; revised September 19, 2019}}
\author{%
  Ron Estrin%
  \thanks{Institute for Computational and Mathematical Engineering, Stanford  University, Stanford, CA. (\mailto{restrin@stanford.edu.})}
  \and
  Michael P. Friedlander%
    \thanks{Department of Computer Science,
    University of British Columbia, Vancouver V6T 1Z4, BC, Canada
    (\mailto{mpf@cs.ubc.ca}). Research partially supported by the Office of Naval Research [award N00014-17-1-2009].}
  \and
  Dominique Orban%
    \thanks{GERAD and Department of Mathematics and Industrial Engineering,
    \'Ecole Polytechnique, Montr\'eal, QC, Canada
    (\mailto{dominique.orban@gerad.ca}). Research supported by
    supported by NSERC Discovery Grant 299010-04.}
  \and
  Michael~A.~Saunders%
    \thanks{Systems Optimization Laboratory, Department of Management Science and Engineering, Stanford University, Stanford, CA. E-mail: \mailto{saunders@stanford.edu}. Research partially supported by the National Institute of General Medical Sciences of the National Institutes of Health [award U01GM102098].}}
\newcommand{\phis}{\phi_{\sigma}}
\newcommand{\wphis}{\widetilde{\phi}_\sigma}
\newcommand{\phisr}{\phi_{\sigma, \rho}}
\newcommand{\ys}{y_{\sigma}}
\newcommand{\wys}{\widetilde{y}_{\sigma}}
\newcommand{\Ys}{Y_{\sigma}}
\newcommand{\wYs}{\widetilde{Y}_{\sigma}}
\newcommand{\gLag}{g\subl}
\newcommand{\hLag}{H\subl}
\newcommand{\Hs}{H_{\sigma}}
\newcommand{\wHs}{\widetilde{H}_{\sigma}}
\newcommand{\gs}{g_{\sigma}}
\newcommand{\wgs}{\widetilde{g}_\sigma}
\newcommand{\ws}{w_{\sigma}}
\newcommand{\Ss}{S_{\sigma}}
\newcommand{\wSs}{\widetilde{S}_{\sigma}}
\newcommand{\augmat}{\bmat{I & A \\ A^T}}
\newcommand{\epsd}{\epsilon_d}
\newcommand*{\LSQR}{\hbox{\small LSQR}\xspace}
\newcommand*{\LSLQ}{\hbox{\small LSLQ}\xspace}
\newcommand*{\LNLQ}{\hbox{\small LNLQ}\xspace}
\newcommand*{\CG}{\hbox{\small CG}\xspace}
\newcommand*{\MINRES}{\hbox{\small MINRES}\xspace}
\newcommand*{\SYMMLQ}{\hbox{\small SYMMLQ}\xspace}
\newcommand*{\CRAIG}{\hbox{\small CRAIG}\xspace}
\newcommand*{\GMRES}{\hbox{\small GMRES}\xspace}
\def\rhalf{{\textstyle{\tfrac{1}{2} \rho}}}
\def\ind{\mathbbm{1}}
\newcommand{\Bprime}{B^{\prime}}
\begin{document}

\maketitle

\begin{center} \it Dedicated to Roger Fletcher \end{center}

\thispagestyle{firstpage}
\pagestyle{myheadings}

\begin{abstract}
  We develop a general equality-constrained nonlinear optimization algorithm based on a
  smooth penalty function proposed by \cite{Fletcher:1970}. Although it was
  historically considered to be computationally prohibitive in practice, we demonstrate
  that the computational kernels required are no more expensive than other
  widely accepted methods for nonlinear optimization. The main %computational
  kernel required to evaluate the penalty function and its derivatives is
  solving a structured linear system. We show how to solve this system
  efficiently by storing a single factorization each iteration when the matrices
  are available explicitly. We further show how to adapt the penalty
  function to the class of factorization-free algorithms by solving the linear system
  iteratively. The penalty function therefore has promise
  when the linear system can be solved efficiently, e.g., for PDE-constrained optimization problems where efficient preconditioners exist. 
  We discuss extensions including handling simple constraints explicitly, 
  regularizing the penalty function, and inexact evaluation of
  the penalty function and its gradients. We demonstrate the merits
  of the approach and its various features on some nonlinear programs from a
  standard test set, and some PDE-constrained optimization problems.
\end{abstract}

\section{Introduction}
\label{sec:introduction}

We consider a penalty-function approach for solving general
equality-constrained nonlinear optimization problems
\begin{equation}
  \label{eq:nlp}
  \tag{NP}
  \minimize{x \in \R^n} \enspace f(x) \enspace\st\enspace c(x)=0\ :\ y,
\end{equation}
where $f: \Real^n \to \Real$ and $c: \Real^n \to \Real^m$ are smooth
functions ($m \le n$), and $y \in \Real^m$ is the vector of Lagrange multipliers.
A smooth exact penalty function $\phis$ is used to eliminate the
constraints $c(x)=0$.  The penalty function is the Lagrangian
$L(x,y) = f(x) - c(x)\T y$ evaluated at $y = y_{\sigma}(x)$ (defined in \eqref{eq:3}) treated as a function
of $x$ depending on a parameter \(\sigma > 0\). Hence, the penalty function depends only on the primal
variables $x$.  It was first proposed by \cite{Fletcher:1970} for~\eqref{eq:nlp}.
A long-held view is that Fletcher's penalty
function is not practical because it is costly to compute
\citep{Bert:1975, ConnGoulToin:2000, NocedalW:2006}. In particular,
\citet[p.436]{NocedalW:2006} warn that ``although this merit function has
some interesting theoretical properties, it has practical
limitations\ldots". Our aim is to challenge that notion and to
demonstrate that the computational kernels are no more expensive than
other widely accepted methods for nonlinear optimization, such as
sequential quadratic programming.

The penalty function is {\em exact} because local minimizers
of~\eqref{eq:nlp} are minimizers of the penalty function for all
values of $\sigma$ larger than a finite threshold $\sigma^*$.  The
main computational kernel for evaluating the penalty function and its
derivatives is the solution of a certain saddle-point system; see \eqref{eq:aug-generic}. If
the system matrix is available explicitly, we show how to factorize it
once and re-use the factors to evaluate the penalty function and its derivatives. We also adapt the penalty function for {\em
  factorization-free} optimization by solving
the linear system iteratively. This makes the penalty function
particularly applicable for certain problem classes, such as
PDE-constrained optimization problems when good preconditioners
exist; see \cref{sec:numerical-results}.

% There are several advantages to using a smooth exact penalty method: exactness allows us to solve only a single optimization problem (for a sufficiently large penalty parameter); smoothness avoids slowed convergence and the Maratos effect \citep{Mara:1978} experienced by nonsmooth penalty methods; penalty formulations can avoid the necessity of complicated line-search or filter rules, as well as heuristics for trading-off feasibility and optimality, such as those employed by \cite{WachBieg:2006}. In particular, this penalty approach is amenable to {\em matrix-free} optimization, requiring only matrix-vector products without any factorizations. Examples of other matrix-free optimization approaches include algorithms by \citep{AnitZava:2014, HeinRidz:2014}.

% We assume that stationary points of \eqref{eq:nlp} satisfies constraint qualifications outlined in \cref{assump:licq}.
% \begin{assumption}
% \label{assump:licq}
% Local minimizers of \eqref{eq:nlp} satisfy the linear independence constraint qualification (LICQ) \citep{NocedalW:2006}: if $\xstar$ is a local minimizer of \eqref{eq:nlp}, then $\nabla c(\xstar)$ has full rank.
% \end{assumption}
% This assumption is necessary to ensure that the penalty function is defined in a neighbourhood of a stationary point $\xstar$.

\subsection{Related work on penalty functions}
\label{sec:related-work-smooth}

Penalty functions have long been used to solve constrained problems by
transforming them into unconstrained problems that penalize violations
of feasibility. We provide a brief overview of common penalty methods
and their relation to Fletcher's penalty $\phis(x)$. More
detail is given by \citet{PilloGrippo:1984},
\citet{ConnGoulToin:2000}, and \citet{NocedalW:2006}.

The simplest example is the quadratic penalty function
\citep{Cour:1943}, which removes the nonlinear constraints by adding
$\tfrac{1}{2} \rho \norm{c(x)}^2$ to the objective (for some
$\rho>0$). There are two main drawbacks: a
sequence of optimization subproblems must be solved with increasing
$\rho$, and a feasible solution is obtained only when
$\rho \rightarrow \infty$. As $\rho$ increases, the subproblems become increasingly
difficult to solve.

An alternative to smooth non-exact penalty functions is an exact
non-smooth function such as the 1-norm penalty $\rho
\norm{c(x)}_1$ %(any non-squared penalty will work)
\citep{Piet:1969,Flet:1985b}.
%\citep[\S 17.2]{Flet:1985b,NocedalW:2006}. %A finite threshold $\rhobar$ exists such that for $\rho \geq \rhobar$, minimizers of \eqref{eq:nlp} will be minimizers of the penalty function.
However, only non-smooth optimization methods apply, which typically
exhibit slower convergence. \cite{Mara:1978} further noted that
non-smooth merit functions may reject steps and prevent quadratic
convergence.

Another distinct approach is the class of augmented Lagrangian
methods, independently introduced by \cite{Hest:1969} and
\cite{Powe:1969}. These minimize a sequence of augmented
Lagrangians,
$L_{\rho_k}(x,y_k) = L(x, y_k)~+~\tfrac{1}{2} \rho_k \norm{c(x)}^2$. When $y_k$
is optimal, $L_{\rho_k} (x,y_k)$ is exact for sufficiently large $\rho_k$, thus avoiding the stability
issues of the quadratic penalty. However, a sequence of subproblems
must be solved to drive $y_k$ to optimality.

Although these penalty functions have often been successful in
practice, in light of their drawbacks, a class of smooth exact penalty
functions has been explored \citep{PilloGrippo:1984,
  AnitZava:2014}. With smooth exact penalty functions, constrained
optimization problems such as~\eqref{eq:nlp} can be replaced by a
single smooth unconstrained optimization problem (provided the penalty
parameter is sufficiently large).  Approximate second-order methods
can be applied to obtain at least superlinear local convergence. These
methods are variations of minimizing the augmented Lagrangian, where
either the multipliers are parametrized in terms of $x$,
or they are kept independent and the gradient of
the Lagrangian is penalized. The price for smoothness (as we find for $\phis$)
is that a derivative of the penalty
function requires a higher-order derivative from the original problem
data.  That is, evaluating $\phis$ requires $\nabla f$ and $\nabla c$;
$\nabla \phis$ requires $\nabla^2 f$ and $\nabla^2 c_i$; and so
on. The third derivative terms are typically discarded during
computation, but it can be shown that superlinear convergence is
retained \citep[Theorem 2]{Fletcher:1973}.

\cite{Fletcher:1970} introduced the class of smooth exact penalty
functions from which $\phis$ originates.  Extensions and variations of
this class have been explored by several authors, whose contributions
are described by \citet[\S 14.6]{ConnGoulToin:2000}. However,
\cite{Fletcher:1970} envisioned his method being applied to small problems,
and assumed ``the matrices in the problem are
non-sparse". Further, most developments surrounding this method
focused on linesearch schemes that require computing an explicit
Hessian approximation and using it to compute a Newton direction. One
of our goals is to show how to adapt the method to large-scale
problems by taking advantage of computational advances made since
Fletcher's proposal. Improved sparse matrix factorizations and
iterative methods for solving linear systems, and modern Newton-CG
trust-region methods \citep{Toint81b,Stei:1983}, play a key role in
the efficient implementation of his penalty function. We also show how
regularization can be used to accommodate certain constraint degeneracy, and
explain how to take advantage of inexact evaluations of functions and
gradients.

%either by caching sparse matrix factorizations, or avoiding factorizations entirely and solely using matrix-vector products. We challenge the conventional notion that Fletcher's penalty function is not a practical method \citep{NocedalW:2006}, and discuss practicalities not previous considered in this context, such as what algorithms are suitable for minimizing $\phis$, how to handle regularization, and how to take advantage of inexact evaluations of $\phis$ and its derivatives in a large-scale setting.

\subsection{Outline}
\label{sec:outline}

We introduce the penalty function in
\cref{sec:equality-constraints} and describe its properties and
derivatives in \cref{sec:prop-penalty-funct}. In
\cref{sec:comp-penalty-funct} we discuss options for efficiently
evaluating the penalty function and its derivatives. 
We then discuss
modifications of the penalty function in
\cref{sec:explicit-constraints}--\ref{sec:regularization} to take
advantage of linear constraints and to regularize the penalty function
if the constraint Jacobian is rank-deficient.
%We then discuss how to regularize the penalty function 
%in \cref{sec:regularization} if the constraint Jacobian is rank-deficient. 
In some applications, it
may be necessary to solve large linear systems inexactly, and we show
in \cref{sec:inexact} how the resulting imprecision can be
accommodated. Other practical matters are described in
\cref{sec:practical-considerations}. We apply the penalty function to
several optimization problems in \cref{sec:numerical-results}, and
conclude with future research directions in \cref{sec:conclusion}.

%Problems with inequality constraints are considered in a forthcoming paper.
% We begin by focusing on nonlinear programs with only equality constraints first ($\ell = -\infty$, $u = \infty$). Fletcher describes extensions to handle inequality constraints, but as we elaborate in \cref{sec:abstract-constraints}, this approach causes the penalty function to become nonsmooth, and so we will provide our own generalization.

%\smarttodo{We could use a Notation section}

\section{The penalty function for equality constraints}
\label{sec:equality-constraints}

% We begin by restricting our attention to the simpler case where we only have nonlinear equality constraints
% \begin{equation}
%   \label{eq:nlp-eq}
%   \tag{NP-EQ}
%   \minimize{x \in \R^n} \quad f(x) \quad\st\quad c(x)=0.
% \end{equation}
For \eqref{eq:nlp}, Fletcher's penalty function is
\begin{equation}
  \label{eq:phi}
  \phis(x) := f(x) - c(x)\T \ys(x),
\end{equation}
where $\ys(x)$ are Lagrange multiplier estimates defined
with other items as
% the solution of the least-squares problem
\begin{subequations}
\begin{align}
   \label{eq:3}
   \ys(x) &:= \textstyle\argmin_{y}%\ 
                   \left\{\half\norm{A(x)y - g(x)}_2^2 + \sigma c(x)\T y\right\},
                   & g(x) := \nabla f(x),  
\\ \label{eq:8}
   A(x)        &:= \nabla c(x) = \bmat{g_1(x) &\dots& g_m(x)},
                   & g_i(x):= \nabla c_i(x),
\\ \label{eq:Y}
   \Ys(x) &:= \nabla \ys(x).
\end{align}
\end{subequations}
Note that $A$ and $\Ys$ are $n$-by-$m$ matrices. The form of $\ys(x)$ is reminiscent of the variable-projection algorithm of \cite{GoluPere:1973} for separable nonlinear least-squares problems.

We assume that~\eqref{eq:nlp} satisfies some variation of the following conditions:

\smallskip

% \begin{enumerate}[label=(A\arabic*)]
% \item $f$ and $c$ are twice continuously differentiable and have
%   Lipschitz second-derivatives (\labelText{A1a}{assump:c2}), or are
%   three-times continuously
%   differentiable~(\labelText{A1b}{assump:c3}).
% \item The linear independence constraint qualification (LICQ) is
%   satisfied at stationary points of~\eqref{eq:nlp}
%   (\labelText{A2a}{assump:licq-min}), or for all $n$-vectors $x$
%   (\labelText{A2b}{assump:licq-all}). (LICQ is satisfied at a point 
%   $x$ if the vectors
%   $\left\{\nabla c_i(x)\right\}_{i=1}^m$ are linearly independent.)
% \item The problem is feasible, i.e., there exists $x$ such that
%   $c(x) = 0$.
% \end{enumerate}

\begin{enumerate}[label=(A\arabic*)]
\item $f$ and $c$ are twice continuously differentiable and either:
  \begin{enumerate}[label=(A1\alph*)]
    \item\label{assump:c2} have Lipschitz second-derivatives, or
    \item\label{assump:c3} are three-times continuously differentiable.
  \end{enumerate}
  %have
  %Lipschitz second-derivatives (\labelText{A1a}{assump:c2}), or are
  %three-times continuously
  %differentiable~(\labelText{A1b}{assump:c3}).
\item The linear independence constraint qualification (LICQ) is
  satisfied at:
  \begin{enumerate}[label=(A2\alph*)]
    \item\label{assump:licq-min} stationary points of~\eqref{eq:nlp}, or
    \item\label{assump:licq-all} all $n$-vectors $x$.
  \end{enumerate}
  %stationary points of~\eqref{eq:nlp}
  %(\labelText{A2a}{assump:licq-min}), or for all $n$-vectors $x$
  %(\labelText{A2b}{assump:licq-all}). 
  \hspace*{-25pt}(LICQ is satisfied at a point 
  $x$ if the vectors
  $\left\{\nabla c_i(x)\right\}_{i=1}^m$ are linearly independent.)
\item The problem is feasible, i.e., there exists $x$ such that
  $c(x) = 0$.
\end{enumerate}

\smallskip

\noindent
Assumption~\ref{assump:c3} ensures that $\phis$ has two continuous derivatives and is typical for smooth exact penalty functions \citep[Proposition 4.16]{Bert:1982}. However, we use at most two derivatives of $f$ and $c$ throughout. We typically assume~\ref{assump:c3} to simplify the discussion, but this assumption can often be weakened to~\ref{assump:c2}.
We also initially assume that~\eqref{eq:nlp} satisfies~\ref{assump:licq-all} so that $\Ys(x)$ and $\ys(x)$ are uniquely defined. We relax this assumption to~\ref{assump:licq-min} in \cref{sec:regularization}.
%Also, $\phis(x)$ is the
%usual Lagrangian function evaluated at $x$ with the multiplier
%estimates $\ys(x)$.

\subsection{Notation} 
\label{sec:notation}

Denote $\xstar$ as a local minimizer of \eqref{eq:nlp}, with corresponding dual solution $\ystar$. Let $H(x)=\nabla^2 f(x)$, $H_i(x)=\nabla^2 c_i(x)$, and define
%\begin{subequations} \label{eq:grad-hess-sig}
%\begin{align}
%  g_\sigma(x) & := g(x) - A(x) \ys(x), \label{eq:grad-sig},
%  \qquad
%  H_\sigma(x) & := H(x) - \sum_{i=1}^m [\ys(x)]_i H_i(x),\label{eq:hess-sig}
%\end{align}
%\end{subequations}
\begin{subequations} \label{eq:grad-hess-sig}
\begin{align}
    \gLag(x,y) &:= g(x) - A(x) y,
    \qquad&
    \gs(x) &:= \gLag(x,\ys(x)),
\\  \hLag(x,y) &:= H(x) - \sum_{i=1}^m y_i H_i(x),
    \qquad&
    \Hs(x) &:= \hLag(x,\ys(x))
\end{align}
\end{subequations}
as the gradient and Hessian of $L(x,y)$ evaluated at $x$ and $y$, or
evaluated at $\ys(x)$.  We also define the matrix operators
\begingroup \allowdisplaybreaks
\begin{align*}
   S(x,v) &:= \nabla_x[A(x)\T v]
            = \nabla_x \bmat{g_1(x)^T v \\ \vdots \\ g_m(x)^T v}
            =          \bmat{v^T H_1(x) \\ \vdots \\ v^T H_m(x)},
\\
T(x,w) &:= \nabla_x[A(x) w]
            = \nabla_x \left[ \sum_{i=1}^m w_i g_i(x) \right]
            =                 \sum_{i=1}^m w_i H_i(x),
\end{align*}
\endgroup where $v \in \Real^n$, $w \in \Real^m$, and $T(x,w)$ is a
symmetric matrix.  The operation of multiplying the adjoint of $S$ with a vector $w$ is described by
\begin{align*}
%   S(x,v)   w &= \bmat{v^T H_1(x) w \\ \vdots \\ v^T H_m(x) w}, \\
S(x,v)^T w &= \left[\sum_{i=1}^m w_i H_i(x)\right] v = T(x,w) v = T(x,w)^T v \,.
%\\T(x,w)^T v &= \left[\sum_{i=1}^m w_i H_i(x)\right] v = T(x,w) v \,.
\end{align*}
%with $v \in \Real^n$, $w \in \Real^m$.
If $A(x)$ has full rank $m$, the operators
\begin{equation*}
  P(x) := A(x)\big(A(x)\T A(x)\big)\inv A(x)^T
  \text{and}
  \Pbar(x) := I - P(x)
\end{equation*}
define, respectively, orthogonal projectors onto $\range(A(x))$ and
its complement. More generally, for a matrix $M$, respectively define
$P_M$ and $\Pbar_M$ as the orthogonal projectors onto $\range(M)$ and
$\ker(M)$.  We define $M^{\dagger}$ as the Moore-Penrose
pseudoinverse, where $M^{\dagger} = (M\T M)^{-1} M^T$ if $M$ has full
column-rank.

Let $\lambda_{\min}(M)$ denote smallest eigenvalue of a square matrix $M$, and let $\sigma_{\min}(M)$ denote the smallest singular value for a general matrix $M$. Unless otherwise indicated, $\norm{\cdot}$ is the 2-norm for vectors and matrices. For $M$ positive definite, $\norm{u}^2_M = u\T M u$ is the energy norm. Define $\ind$ as the vector of all ones.

\section{Properties of the penalty function}
\label{sec:prop-penalty-funct}

We show how the penalty function $\phis(x)$ naturally expresses
the optimality conditions of~\eqref{eq:nlp}. We also give explicit
expressions for the threshold value of the penalty parameter $\sigma$.

\subsection{Derivatives of the penalty function}
\label{sec:derivatives-of-phi}

The gradient and Hessian of $\phis$ may be written as
\begin{subequations} \label{eq:phi-grad-hess}
  \begin{align}
    \nabla  \phis(x)  & = g_\sigma(x) - \Ys(x) c(x),
    \label{eq:phi-grad}
    \\
    \nabla^2 \phis(x) & = H_\sigma(x)- A(x) \Ys(x)^T
                                    - \Ys(x) A(x)^T
                                    - \nabla_x \left[\Ys(x) c \right],
    \label{eq:phi-hess}
  \end{align}
\end{subequations}
where %$H(x)=\nabla^2 f(x)$ and $H_i(x)=\nabla^2 c_i(x)$ are the
%Hessians of the objective and each constraint function, respectively.
the last term $\nabla_x[\Ys(x) c]$ %in the expression for $\nabla^2\phis$
purposely drops the argument on $c$ to emphasize that this gradient is
made on the product $\Ys(x) c$ with $c:=c(x)$ held fixed.  This
term involves third derivatives of $f$ and $c$, and as we shall see,
it is convenient and computationally efficient to ignore it.  We leave
it unexpanded.

\subsection{Optimality conditions}
\label{sec:optim-cond}

The penalty function $\phis$ is closely related to the
Lagrangian $L(x,y)$ associated with~\eqref{eq:nlp}.
To make this connection clear, we define the Karush-Kuhn-Tucker (KKT)
optimality conditions for~\eqref{eq:nlp} in terms of formulas
related to $\phis$ and its derivatives.
%
%The KKT conditions are usually defined via the Lagrangian; e.g., see \cite[Ch.~12]{NocedalW:2006}.  
From the definition of $\phis$ and $\ys$ and the derivatives~\eqref{eq:phi-grad-hess},
% definitions of $\phis$ and its gradient and Hessian
the following definitions are equivalent
to the KKT conditions.

\begin{definition}[First-order KKT point] 
\label{def:kkt-1}
  A point $\xstar$ is a
  first-order KKT point of~\eqref{eq:nlp} if for any $\sigma \geq 0$ the
  following hold:
  \begin{subequations} \label{eq:1st-order}
  \begin{align}
    \label{eq:6}
    c(\xstar)                 &= 0,
  \\\nabla\phis(\xstar) &= 0.
  \end{align}
  \end{subequations}
  The Lagrange multipliers associated with $\xstar$ are $\ystar:=\ys(\xstar)$.
\end{definition}

%We can similarly derive second-order optimality conditions based on
%the Hessian of $\phis$.

\begin{definition}[Second-order KKT point]
\label{def:kkt-2}
  The first-order KKT point
  $\xstar$ satisfies the second-order necessary KKT condition
  for~\eqref{eq:nlp} if for any $\sigma \geq 0$,
  \begin{equation*} \label{eq:2nd-order-nec}
    p^T \nabla^2\phis(\xstar) p \ge 0
    \quad
    \hbox{for all $p$ such that }
    A(\xstar)\T p = 0,
  \end{equation*}
  i.e., \(\bar{P}(\xstar) \nabla^2\phis(\xstar) \bar{P}(\xstar) \succeq 0\).
  The condition is sufficient if the inequality is strict.
%   \begin{equation} \label{eq:2nd-order-suf}
%     p^T \nabla^2\phis(\xstar) p > 0
%     \quad
%     \hbox{for all $p\ne0$ such that }
%     A(\xstar)\T p = 0.
%   \end{equation}
\end{definition}

The second-order KKT condition says that at $\xstar$, $\phis$ has nonnegative curvature along directions in the tangent space of the constraints. However, at $\xstar$, increasing $\sigma$ will increase curvature along the normal cone of the feasible set. We derive a threshold value for $\sigma$ that causes $\phis$ to have nonnegative curvature at a second-order KKT point $\xstar$, as well as a condition on $\sigma$ that ensures stationary points of $\phis$ are primal feasible. For a given first- or second-order KKT pair $(\xstar,\ystar)$ of \eqref{eq:nlp}, we define
\begin{equation}
  \label{eq:14}
  \sigma^* := \half\lambda^+_{\max}\left( P(\xstar) \hLag(\xstar,\ystar)P(\xstar) \right),% \enspace\mbox{and}\enspace \sigma^* = \max\{\sigmabar,0\}
\end{equation}
where $\lambda^+_{\max}(\cdot) = \max\left\{\lambda_{\max}(\cdot),0\right\}$. %$\lambda_{\max}(\cdot)$ is the maximum eigenvalue and %We differentiate between $\sigmabar$ and $\sigma^*$ because $\sigmabar$ is the smallest penalty parameter that ensures that $\nabla^2 \phis(\xstar)$ is locally convex (see \cref{thm:threshold}), which could be negative; however, typically we define penalty parameters as positive quantities.

\begin{lemma}
If $c(x) \in \range(A(x)^T)$, then $\ys(x)$ satisfies
\begin{equation}
    \label{eq:lin-sys-ys}
    A(x)\T A(x) \ys(x) = A(x)\T g(x) - \sigma c(x).
\end{equation}
Further, if $A(x)$ has full rank, then
\begin{equation}
    \label{eq:AAYTx}
    A(x)\T A(x) \Ys(x)\T = A(x)\T \left[ \Hs(x) - \sigma I \right] + S(x,\gs(x)).
\end{equation}
\end{lemma}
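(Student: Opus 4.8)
The plan is to handle the two identities in sequence, the first by a direct optimality argument and the second by implicit differentiation of that first identity.

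For \eqref{eq:lin-sys-ys} I would start from the definition \eqref{eq:3} of $\ys(x)$ as the minimizer in $y$ of the convex quadratic $q(y) := \half\norm{A(x)y - g(x)}^2 + \sigma c(x)^T y$. Its gradient in $y$ is $A^T(Ay - g) + \sigma c$, so any minimizer satisfies the normal equations $A^T A y = A^T g - \sigma c$, which is exactly \eqref{eq:lin-sys-ys}. The one nonroutine point is existence: since $A^T A$ is only positive semidefinite when $A$ is rank-deficient, $q$ is bounded below (and a minimizer exists) precisely when its linear part does not decrease along $\ker(A)$, i.e. when $c \perp \ker(A) = \range(A^T)^{\perp}$. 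This is exactly the hypothesis $c(x) \in \range(A^T)$, under which the right-hand side $A^T g - \sigma c$ lies in $\range(A^T A) = \range(A^T)$ and the system is consistent; when $A$ has full rank the minimizer is unique. I would state this consistency observation explicitly.

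For \eqref{eq:AAYTx} I would assume full rank, so that $\ys$ is the unique and (by the implicit function theorem) smooth solution of \eqref{eq:lin-sys-ys}, and then differentiate \eqref{eq:lin-sys-ys} with respect to $x$. Working with directional derivatives along an arbitrary direction $d$ keeps the bookkeeping clean: using $\nabla c = A$ gives $D_d c = A^T d$; using $D_d g = H d$ (since $g = \nabla f$); and, reading the operators $S$ and $T$ from \cref{sec:notation} as ``differentiate $A^T v$ resp. $Aw$ with the vector argument held fixed,'' gives $D_d(A^T v) = S(x,v) d$ and $D_d(A w) = T(x,w) d$ for fixed $v,w$. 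Applying the product rule to each side of $A^T A \ys = A^T g - \sigma c$ and collecting the term carrying $D_d \ys = \Ys^T d$ on the left yields, after cancelling the arbitrary $d$, a matrix identity for $A^T A \Ys^T$.

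The remainder is algebraic simplification. I would use the linearity of $S$ in its second argument to combine $S(x,g) - S(x,A\ys) = S(x, g - A\ys) = S(x,\gs)$ via \eqref{eq:grad-hess-sig}, and the definition $\Hs = H - T(x,\ys)$ to combine $A^T H - A^T T(x,\ys) = A^T \Hs$; the term from differentiating $\sigma c$ supplies $-\sigma A^T = A^T(-\sigma I)$. Together these give $A^T A \Ys^T = A^T[\Hs - \sigma I] + S(x,\gs)$, as claimed. The main obstacle I anticipate is purely this differentiation bookkeeping: correctly applying the product rule to the triple product $A^T A \ys$, keeping straight which factor is differentiated at each step, and respecting the transposed-gradient convention $\Ys = \nabla \ys$ (so that $D_d \ys = \Ys^T d$, not $\Ys d$). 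The operators $S$ and $T$ are designed precisely to absorb the $A$-varying contributions, so once their directional-derivative identities are in place the rest is routine.
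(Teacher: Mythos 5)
Your proposal is correct and follows essentially the same route as the paper: identity \eqref{eq:lin-sys-ys} comes from the optimality (normal) equations of the least-squares problem \eqref{eq:3}, and \eqref{eq:AAYTx} comes from differentiating \eqref{eq:lin-sys-ys} and absorbing the $A$-derivative terms into $S$ and $T$ before simplifying with \eqref{eq:grad-hess-sig}. Your added remarks on consistency of the normal equations under $c(x)\in\range(A(x)^T)$ and on the convention $\Ys = \nabla\ys$ are details the paper leaves implicit, not a different argument.
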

\begin{proof}
For any $x$, the necessary and sufficient optimality conditions for~\eqref{eq:3} give~\eqref{eq:lin-sys-ys}. By differentiating both sides of~\eqref{eq:lin-sys-ys}, we obtain
\[
   S(x,A(x) \ys(x)) + A(x)\T \left[ T(x,\ys(x)) + A(x) \Ys(x)^T\right]
 = S(x,g(x)) + A(x)^T[H(x) - \sigma I].
\]
From definitions \eqref{eq:grad-hess-sig}, we obtain \eqref{eq:AAYTx}.
\end{proof}

\begin{btheorem}[Threshold penalty value]
  \label{thm:threshold}

   Suppose $\nabla\phis(\xbar)=0$ for some $\xbar$,
   and let $\xstar_1$ and $\xstar_2$ be a first-order
   and a second-order necessary KKT point, respectively, for \eqref{eq:nlp}.
   Let \(\sigma^*\) be defined as in~\eqref{eq:14}.
   Then
  \begin{subequations}
  \begin{align}
    \label{eq:17}
    \sigma > \norm{A(\xbar)^T \Ys(\xbar)}
       &\quad\Longrightarrow\quad g(\xbar) = A(\xbar) y_{\sigma}(\xbar),
                            \quad c(\xbar) = 0;
\\  \label{eq:13}
    \sigma \ge \norm{A(\xstar_1) \Ys(\xstar_1)^T}
       &\quad\Longrightarrow\quad \sigma \ge \sigma^*;
\\  \label{eq:16}
    \nabla^2\phis(\xstar_2) \succeq 0
       &\quad\!\Longleftrightarrow\quad \sigma \ge \sigmabar := \half\lambda_{\max}\left( P(\xstar) \hLag(\xstar,\ystar)P(\xstar) \right).
  \end{align}
  \end{subequations}
  If $\xstar_2$ is second-order sufficient, then the inequalities
  in~\eqref{eq:16} hold strictly. Observe that $\sigma^* = \max\{\sigmabar,0\}$ and that $\sigmabar$ could be negative.
\end{btheorem}
\begin{proof}
  We prove \eqref{eq:17}, \eqref{eq:16}, and \eqref{eq:13} in
  order.

Proof of~\eqref{eq:17}: The condition $\nabla\phis(\xbar)=0$
implies that $\ys(\xbar)$ is well defined and $c(\xbar)\in\mbox{range}(A(\xbar)^T)$, so that
\[
  g(\xbar) = A(\xbar) \ys(\xbar) + \Ys(\xbar) c(\xbar).
\]
Substituting~\eqref{eq:lin-sys-ys} evaluated at $\xbar$ into this equation yields, after
simplifying,
\[
  A(\xbar)\T \Ys(\xbar) c(\xbar) = \sigma c(\xbar).
\]
Taking norms of both sides and using submultiplicativity gives the
inequality $\sigma\norm{c(\xbar)}\le\norm{A(\xbar)^T
  \Ys(\xbar)}\,\norm{c(\xbar)}$, which immediately implies that
$c(\xbar) = 0$. The condition $\nabla \phi_{\sigma}(\xbar) = 0$ then
becomes $g_{\sigma}(\xbar) = 0$.

Proof of~\eqref{eq:16}: Because $\xstar_2$ satisfies~\eqref{eq:1st-order}, we have $g_\sigma(\xstar_2)=0$ and $y^* = \ys(\xstar_2)$, independently of $\sigma$. It follows from \eqref{eq:AAYTx}, $\hLag(\xstar_2,\ystar) = \Hs(\xstar_2)$, $S(\xstar_2, \gs(\xstar_2)) = 0$, and the definition of the projector $P = P(\xstar_2)$ that
\begin{equation}
  \label{eq:11}
  \begin{split}
  A(\xstar_2) \Ys(\xstar_2)^T &= A(\xstar_2) \left(A(\xstar_2)\T A(\xstar_2)\right)^{-1}\! A(\xstar_2)\T [\Hs (\xstar_2) - \sigma I] \\ &= P (\hLag(\xstar_2, \ystar)-\sigma I).
  \end{split}
\end{equation}
We substitute this equation into~\eqref{eq:phi-hess} and use the
relation $P + \bar{P} = I$ to obtain
\begin{equation*}
  \begin{split}
  \label{eq:12}
  \nabla^2\phis(\xstar_2) &= \hLag(\xstar_2, \ystar) - P\hLag(\xstar_2,\ystar) - \hLag(\xstar_2,\ystar)P + 2\sigma P \\&= \Pbar \hLag(\xstar_2,\ystar) \Pbar
                                -  P    \hLag(\xstar_2,\ystar) P + 2\sigma P.
  \end{split}
\end{equation*}
Note that $\Pbar \hLag(\xstar_2,\ystar) \Pbar \succeq 0$ because $\xstar_2$ is a second-order KKT point, so $\sigma$ needs to be sufficiently large that $2 \sigma P - P \hLag(\xstar_2,\ystar) P \succeq 0$, which is equivalent to $\sigma \geq \sigmabar$.

Proof of~\eqref{eq:13}:  With $\xstar_1$ in \eqref{eq:11}, $y^* = \ys(\xstar_1)$, and the properties of $P$, we have
\begin{align*}
  \sigma \ge \norm{A(\xstar_1) \Ys(\xstar_1)^T}
         &=   \norm{P(\hLag(\xstar_1,\ystar)-\sigma I)}
 \\      &\ge \norm{P(\hLag(\xstar_1,\ystar)-\sigma I)P}
 \\      &\ge \norm{P \hLag(\xstar_1,\ystar) P}-\sigma \norm{P}
          \ge 2 \sigma^* - \sigma.
\end{align*}
Thus, $\sigma \ge \sigma^*$ as required.
\end{proof}

According to \cref{eq:16}, if $\xstar$ is a second-order KKT point,
there exists a threshold value $\sigmabar$ such that $\phis$ has
nonnegative curvature for $\sigma \geq \sigmabar$.
As penalty parameters are typically nonnegative, we treat $\sigma^* = \max\{\sigmabar,0\}$ as the threshold.
Unfortunately, as for many exact penalty functions, \cref{thm:threshold} allows the possibility of stationary points of
$\phis(x)$ that are not feasible points of \eqref{eq:nlp};
%\smarttodo{Should the example be here?}
for an
example, see \cref{sec:spurious-min}. However, we rarely encounter
this in practice with feasible problems, and minimizers of $\phis(x)$ usually correspond to
feasible (and therefore optimal) points of \eqref{eq:nlp}.

\subsection{Additional quadratic penalty}
\label{sec:quadratic-penalty}
In light of \cref{thm:threshold}, it is somewhat unsatisfying that local minimizers of $\phis(x)$ might not be local minimizers of \eqref{eq:nlp}. %(even if this may be rare in practice). 
We may add a quadratic penalty term to promote feasibility, and under mild conditions ensure that minimizers of $\phis$ are KKT points of \eqref{eq:nlp}. Like \cite{Fletcher:1970}, we define
\begin{equation}
\label{eq:quadratic-penalty}
    \phisr(x) := \phis(x) + \rhalf \norm{c(x)}^2
    = f(x) - [\ys(x) - \rhalf c(x)]\T c(x). %\nonumber
\end{equation}
The multiplier estimates are now shifted by the constraint violation, similar to an augmented Lagrangian. All expressions for the derivatives follow as before with an additional term from the quadratic penalty. 

\begin{btheorem}[Threshold penalty value for quadratic penalty]
  \label{thm:threshold-quadratic}

Let $\Sscr \subset \R^n$ be a compact set, and suppose that
$\sigma_{\min}(A(x)) \geq \lambda > 0$ for all $x \in \Sscr$. Then for
any $\sigma \geq 0$ there exists $\rhostar(\sigma) > 0$ such
that for all $\rho > \rhostar(\sigma)$, if $\nabla \phisr(\xbar) = 0$
and $\xbar \in \Sscr$, then $\xbar$ is a first-order KKT point for
\eqref{eq:nlp}.
\end{btheorem}
\begin{proof}
The condition $\nabla \phisr(\xbar) = 0$ implies that
\begin{equation*}
    g(\xbar) - A(\xbar) \ys(\xbar) - \Ys(\xbar) c(\xbar) = \rho A(\xbar) c(\xbar).
\end{equation*}
We premultiply with \(A(\xbar)\T\) and use \eqref{eq:lin-sys-ys} to obtain
\begin{equation}
\label{eq:18}
\left( \sigma I - A(\xbar)^T \Ys(\xbar) \right) c(\xbar)  = \rho A(\xbar)\T A(\xbar) c(\xbar).
\end{equation}
%\sloppypar{
The left-hand side of \eqref{eq:18} is a continuous matrix function
with finite supremum
${R(\sigma) := \sup_{x \in \Sscr} \norm{\sigma I - A(x)^T \Ys(x)}}$ defined
over the compact set $\Sscr$. We now define
$\rhostar(\sigma) := R(\sigma)/\lambda^2$, so that for
$\rho > \rhostar(\sigma)$,% if $c(\xbar) \ne 0$,%}
\begin{align*}
    R(\sigma) \norm{c(\xbar)} &\ge
    \norm{\sigma I - A(\xbar)^T \Ys(\xbar) \norm{\cdot} c(\xbar)} 
\\  &\ge \norm{\left( \sigma I - A(\xbar)^T \Ys(\xbar) \right) c(\xbar)}
\\  &= \rho \norm{A(\xbar)^T A(\xbar) c(\xbar)} \ge \rho \lambda^2 \norm{c(\xbar)}.% > R(\sigma) \norm{c(\xbar)},
\end{align*}
%which is a contradiction, implying $\norm{c(\xbar)} = 0$, so that $\xbar$ is feasible for \eqref{eq:nlp}.
The above inequality only holds when $c(\xbar) = 0$ because $\rho\lambda^2 > R(\sigma)$, so $\xbar$ is feasible for \eqref{eq:nlp}.
Because $c(\xbar) = 0$ and $\nabla \phis(\xbar) = \nabla \phisr(\xbar) = 0$, $\xbar$ is a first-order KKT point.
\end{proof}

We briefly consider the case $\sigma = 0$ and $\rho > 0$. The threshold value to ensure positive semidefiniteness of $\nabla^2 \phisr$ at a second-order KKT pair $(\xstar, \ystar)$ to \eqref{eq:nlp} is
$$\rhostar = \lambda_{\max}^+\left(A(\xstar)^{\dagger} \hLag(\xstar, \ystar) \left(A(\xstar)^{\dagger}\right)^T \right).$$
\cite{Fletcher:1970} gives an almost identical (but slightly looser) expression for $\rhostar$.
This threshold parameter is more difficult to interpret in terms of the problem data compared to $\sigma^*$ due to the pseudoinverse.
We give a theorem analogous to \cref{thm:threshold}.%, but omit the proof as it is nearly identical. %to that of \cref{thm:threshold}.

\begin{btheorem}
   \label{thm:threshold-quadratic-2}

   Suppose $\sigma = 0$ and $\rho \geq 0$. Let $\nabla\phisr(\xbar)=0$ for some $\xbar$,
   and let $\xstar$ be a second-order necessary KKT point for \eqref{eq:nlp}.
   Then
  \begin{subequations}
  \begin{align}
    \rho > \norm{A(\xbar)^{\dagger} \Ys(\xbar)}
       &\enspace\Longrightarrow\enspace g(\xbar) = A(\xbar) y_{\sigma}(\xbar),
                            \quad c(\xbar) = 0;
\\  \label{eq:16-2}
    \nabla^2\phis(\xstar) \succeq 0
       &\enspace\!\Longleftrightarrow\enspace \rho \ge \rhobar := \lambda_{\max}(A(\xstar)^{\dagger} \hLag(\xstar, \ystar) \left(A(\xstar)^{\dagger}\right)^T ).
  \end{align}
  \end{subequations}
  If $\xstar$ is second-order sufficient, the inequalities
  in~\eqref{eq:16-2} hold strictly.
\end{btheorem}
\begin{proof}
The proof is identical to that of \cref{thm:threshold}.
\end{proof}

Using $\rho > 0$ can help cases where attaining feasibility is
problematic for moderate values of $\sigma$. For simplicity
we let $\rho=0$ from now on, because it is trivial to
evaluate $\phisr$ and its derivatives if one can compute $\phis$.

\subsection{Scale invariance}
Note that $\phis$ is invariant under diagonal scaling of the constraints, i.e., if $c(x)$ is replaced by $D c(x)$ for some diagonal matrix $D$, then $\phis$ is unchanged. It is an attractive property for $\phis$ and $\sigma^*$ to be independent of some choices in model formulation, like the Lagrangian.
However, $\phisr$ with $\rho > 0$ is not scale invariant, like the augmented Lagrangian, because of the quadratic term. Therefore, constraint scaling is an important consideration if $\phisr$ is to be used.

% \subsection{Approximate solutions}
% Consider the following termination criteria to determine approximate solutions $\xbar$:
% \begin{subequations} \label{eq:termination}
% \begin{align}
%     \| c(\xbar) \| &\leq \epsp, \label{eq:termination-primal-feas}
% \\  \| \gs(\xbar) \| &\leq \epsd, \label{eq:termination-dual-feas}
% \\  \| \nabla \phis(\xbar) \| &\leq \epsf. \label{eq:termination-penalty}
% \end{align}
% \end{subequations}
% We consider $\xbar$ as an approximate solution to \eqref{eq:nlp} if it satisfies \eqref{eq:termination-primal-feas} and either of \eqref{eq:termination-dual-feas}--\eqref{eq:termination-penalty}.

% \todo{Discuss what termination criteria should be used for this. Establish constants.}

\section{Evaluating the penalty function}
\label{sec:comp-penalty-funct}

The main challenge in evaluating $\phis$ and its gradient is solving
the shifted least-squares problem~\eqref{eq:3} in order to
compute $\ys(x)$, and computing the Jacobian $\Ys(x)$. Below we show
it is possible to compute products $\Ys(x)v$
and $\Ys(x)\T u$ by solving structured linear systems involving the
matrix used to compute \(y_{\sigma}(x)\). If direct methods are used, a single
factorization that gives the solution~\eqref{eq:3} is sufficient for all products.

For this section, it is convenient to drop the arguments on the various functions and assume they are all evaluated at a point $x$ for some parameter $\sigma$.  For example,
\(
\ys = \ys(x), \ A = A(x), \ \Ys = \Ys(x), \ H_\sigma = H_\sigma(x), \ S_\sigma = S(x,g_\sigma(x))\), etc.  We also express~\eqref{eq:AAYTx} using the shorthand notation
\begin{equation}
  \label{eq:ATAY}
  A\T A \Ys^T = A^T[H_\sigma - \sigma I] + S_\sigma.
\end{equation}
We first describe how to compute products $\Ys u$ and $\Ys\T v$, then describe how to put those pieces together to evaluate the penalty function and its derivatives.

\subsection{Computing the product $\mathbf{\Ys u}$}

For a given $u \in \Real^m$, we premultiply~\eqref{eq:ATAY} by \(u\T (A\T A)^{-1}\) to obtain
\begin{align*}
    \Ys u &= [\Hs - \sigma I] A(A\T A)^{-1}u + S_{\sigma}^T (A\T A)^{-1} u
\\     &= [\Hs - \sigma I] v - S_{\sigma}^T w,
\end{align*}
where we define $w = -(A\T A)^{-1}u$ and $v = -Aw$. % = A(A^T A)^{-1}u$.
Observe that $w$ and $v$ solve the system
\begin{equation}
  \label{eq:aug-Yu}
  \augmat \bmat{v\\w} = \bmat{
0\\u}.
\end{equation}
\autoref{alg:Yu} formalizes the process.

\begin{algorithm}[hbt]
  \caption{%
    Computing the matrix-vector product $\Ys u$
    \label{alg:Yu}
  }
  \begin{algorithmic}[1]
    \State $(v,w) \gets \hbox{solution of~\eqref{eq:aug-Yu}}$
    \State \Return $[H_\sigma - \sigma I] v - S_\sigma\T w$\;
%    \State \Return $\Ys u$
  \end{algorithmic}
\end{algorithm}

\subsection{Computing the product $\mathbf{\Ys\T v}$}
Multiplying both sides of~\eqref{eq:ATAY} on the right by
$v$ gives
\[
  A\T A (\Ys\T v) = A^T([H_\sigma-\sigma I]v) + (S_\sigma v).
\]
The required product $u=\Ys\T v$ is in the solution of the system
\begin{equation}
  \label{eq:aug-YTv}
  \bmat{I & A \\ A^T & } \bmat{r \\ u}
  =
  \bmat{[H_\sigma-\sigma I]v \\ -S_\sigma v}.
\end{equation}
\cref{alg:YTv} formalizes the process.

\begin{algorithm}[hbt]
  \caption{%
    Computing the matrix-vector product $\Ys\T v$
    \label{alg:YTv}
  }
  \begin{algorithmic}[1]
    \State Evaluate $[H_\sigma - \sigma I] v$ and $S_\sigma v$
    \State $(r,u) \gets \hbox{solution of~\eqref{eq:aug-YTv}}$
    \State \Return $u$
  \end{algorithmic}
\end{algorithm}

\subsection{Computing multipliers and first derivatives}
The multiplier estimates $\ys$ can be obtained from the optimality conditions for~\eqref{eq:3}:
\begin{equation}
  \label{eq:aug-mult}
  \begin{bmatrix} I & A \\ A^T \end{bmatrix}
  \begin{bmatrix} \gs \\ \ys \end{bmatrix}
  =
  \begin{bmatrix} g \\ \sigma c\end{bmatrix},
\end{equation}
which also gives $\gs$. \cref{alg:Yu} then gives $\Ys c$ and hence $\nabla \phis$ in \cref{eq:phi-grad}.

Observe that we can re-order operations to take advantage of specialized solvers. Consider the pair of systems
\begin{equation}
  \label{eq:aug-pair}
  \begin{bmatrix} I & A \\ A^T \end{bmatrix}
  \begin{bmatrix} d \\ y \end{bmatrix}
  =
  \begin{bmatrix} g \\ 0 \end{bmatrix}
  \qquad \mbox{and} \qquad
  \begin{bmatrix} I & A \\ A^T \end{bmatrix}
  \begin{bmatrix} v \\ w \end{bmatrix}
  =
  \begin{bmatrix} 0 \\ c \end{bmatrix}.
\end{equation}
We have $\gs = d + \sigma v$ and $\ys = y + \sigma w$, while the computation of $\Ys c$ is unchanged. The systems in \eqref{eq:aug-pair} correspond to pure least-squares and least-norm problems respectively. Specially tailored solvers may be used to improve efficiency or accuracy. This is further explored in \cref{sec:solve-aug}. %For example, if iterative solvers are to be used, least-squares solvers such as \LSQR \citep{PaigSaun:1982}, \LSMR \citep{Fong:2011} or \LSLQ \citep{EstOrbSaun:2016}, and least-norm solvers such as \CRAIG \citep{Craig:1955} or \LNLQ \cite{EstOrbSaun:2019LNLQ} can be used, compared to full-space symmetric solvers such as \MINRES or \SYMMLQ \citep{PaigSaun:1975}.

\subsection{Computing second derivatives}
\label{sec:second-derivatives}
We can approximate $\nabla^2 \phis$ using~\eqref{eq:phi-hess} and~\eqref{eq:AAYTx} in two ways according to
\begin{subequations}
\label{eq:hess-approx}
\begin{align}
    \nabla^2\phis &\approx B_1 := \Hs - A\Ys^T - \Ys A^T \label{eq:hess-approx-1}
\\  &\phantom{\approx B_1 :}= \Hs - \widetilde{P} \Hs - \Hs \widetilde{P} + 2\sigma \widetilde{P} - A(A^T A)^{-1} S_{\sigma} -  S_{\sigma}^T (A^T A)^{-1}A \nonumber
\\  \nabla^2\phis &\approx B_2 := \Hs - \widetilde{P} \Hs - \Hs \widetilde{P} + 2 \sigma \widetilde{P}, \label{eq:hess-approx-2}
\end{align}
\end{subequations}
where $\widetilde{P} = A(A\T A)^{-1} A\T$. Note that
$\widetilde{P} = P_A$ here, but this changes when regularization is
used; see \cref{sec:regularization}.  The first approximation ignores
$\nabla[\Ys(x)c]$ in~\eqref{eq:phi-hess}, while the second ignores
$S_{\sigma} = S(x,\gs(x))$. Because we expect $c(x)\to0$ and
$\gs(x) \rightarrow 0$, $B_1$ and $B_2$ are similar to Gauss-Newton
approximations to $\nabla^2\phis(x)$, and as \citet[Theorem 2]{Fletcher:1973}
shows, using them in a Newton-like scheme is sufficient for quadratic
convergence if \ref{assump:c2} is satisfied.

Because $\widetilde{P}$ is a projection on $\range(A)$, we can compute products $\widetilde{P} u$ by solving
\begin{equation}
  \label{eq:aug-proj}
  \begin{bmatrix} I & A \\ A^T \end{bmatrix}
  \begin{bmatrix} p \\ q \end{bmatrix}
  =
  \begin{bmatrix} u \\ 0 \end{bmatrix}
\end{equation}
and setting $\widetilde{P} u \gets u - p$. Note that with regularization, the $(2,2)$ block of this system is modified and $\widetilde{P}$ is no longer a projection; see \cref{sec:regularization}.

The approximations~\eqref{eq:hess-approx-1} and~\eqref{eq:hess-approx-2} trade Hessian accuracy for computational efficiency. If the operator $S(x,v)$ is not immediately available (or not efficiently implemented), it may be avoided. Using $B_2$ requires only least-square solves, which allows us to apply specialized solvers (e.g., \LSQR \citep{PaigSaun:1982}), which cannot be done when products with $\Ys^T$ are required.

\subsection{Solving the augmented linear system}
\label{sec:solve-aug}

We discuss some approaches to solving linear systems of the form
\begin{equation}
  \label{eq:aug-generic}
  \Kscr
  \begin{bmatrix} p \\ q \end{bmatrix}
  =
  \begin{bmatrix} w \\ z \end{bmatrix}, 
  \qquad \Kscr := \begin{bmatrix} I & A \\ A^T & -\delta^2 I \end{bmatrix},
\end{equation}
which have repeatedly appeared in this section. Although $\delta = 0$ so far, we look ahead to regularized systems as they require only minor modification. Let $(\pstar, \qstar)$ solve \eqref{eq:aug-generic}. 
%Define $A_{\delta} := \bmat{A\T & \delta I}\T$ when $\delta > 0$; otherwise $A_{\delta} := A$.

Conceptually it is not important how this system is solved as long as
it is with sufficient accuracy. However, %from a practical point of view,
this is the most computationally intensive part of using
$\phis$. Different solution methods have different advantages and
limitations, depending on the size and sparsity of $A$, whether $A$ is
available explicitly, and the prescribed solution accuracy.

%\subsubsection{Direct methods}
%\label{sec:direct-methods}

One option is direct methods: factorize $\Kscr$ once per iteration and use the factors to solve with each right-hand side. Several factorization-based approaches can be used with various advantages and drawbacks; see the supplementary materials for details.%\cref{sec:solve-aug-sys}.

%\subsubsection{Iterative methods}
%\label{sec:iterative-methods}
In line with the goal of creating a factorization-free solver for minimizing
$\phis$, we discuss iterative methods for solving
\eqref{eq:aug-generic}, particularly Krylov subspace solvers. This
approach has two potential advantages: if a good preconditioner
$\Pscr \approx A\T A$ is available, then solving
\eqref{eq:aug-generic} could be much more efficient than with direct
methods, and we can take advantage of solvers using inexact function
values, gradients or Hessian products by solving
\eqref{eq:aug-generic} approximately; see \citet{HeinVice:2001} and \citet{KourHeinRidzBloe:2014}.
For example, \citet{PearStolWath:2012} and \citet{Simo:2012} describe various preconditioners for saddle-point systems arising in PDE-constrained optimization, which are closely related to the augmented systems in \eqref{eq:aug-generic}.

When $z = 0$, \eqref{eq:aug-generic} is a (regularized) least-squares problem: $\min_q \norm{A q - w} + \delta \norm{q}^2$. 
We use \LSQR \citep{PaigSaun:1982}, which ensures that the error in iterates $p_k$ and $q_k$ decreases monotonically at every iteration.  (\cite{HestStie:1952} show this for CG, and \LSQR is equivalent to \CG on the normal equations.) Furthermore, \cite{EstOrbSaun:2019} provide a way to compute an upper bound on $\norm{\pstar - p_k}$ and $\norm{\qstar - q_k}$ via \LSLQ when given an underestimate of $\sigma_{\min}(A\Pscr^{-1/2})$. (Note that the error norm
for $q$ depends on the preconditioner.) Further discussion is in \cref{sec:numerical-results}.

When $w = 0$, \eqref{eq:aug-generic} is a least-norm problem:
$\min_{p,s} \norm{p}^2 + \norm{s}^2 \mbox{ s.t. } A^T p + \delta s = z$. We then use \CRAIG
\citep{Craig:1955} because it minimizes the error in each Krylov
subspace. Given the same underestimate of
$\sigma_{\min}(A\Pscr^{-1/2})$, \citet{Ario:2013} and
\citet{EstOrbSaun:2019LNLQ} give a way to bound the error norms for $p$
and $q$.%, the latter accomplishing this via \LNLQ.

Recall that $\phis$ and $\nabla \phis$ can be computed by solving only
least-squares and least-norm problems (only one of $w$ and $z$ is
nonzero at a time). Furthermore, if \eqref{eq:hess-approx-2} is used,
the remaining solves with $\Kscr$ are least-squares
solves. % (because products with $\Ys^T$ are not used).
If both $w$ and $z$ are nonzero (for products with $\Ys^T$), we can
shift the right-hand side of \eqref{eq:aug-generic} and solve the
system
\begin{equation*}
    \Kscr \bmat{\pbar \\ q} = \bmat{0 \\ z - A^T w}, \qquad p = \pbar + w.
\end{equation*}
Thus, \eqref{eq:aug-generic} can be solved by \CRAIG or \LNLQ \citep{Ario:2013,EstOrbSaun:2019LNLQ} or other least-norm solvers. 

Although $\Kscr$ is symmetric indefinite, we do not recommend methods such as \MINRES or \SYMMLQ \citep{PaigSaun:1975}. \cite{OrbaArio:2017} show that if full-space methods are applied directly to $\Kscr$ %is equivalent to applying the least-squares and least-norm specialized solvers on related problems, and that every other iteration of the linear system solve makes very little progress. 
then every other iteration of the solver makes little progress.
However, if solves with $\Pscr$ can only be performed approximately, it may be necessary to apply flexible variants of nonsymmetric full-space methods to $\Kscr$, such as flexible \GMRES \citep{Saad:1993}.

\section{Maintaining explicit constraints}
\label{sec:explicit-constraints}
We consider a variation of \eqref{eq:nlp} where some of the constraints $c(x)$ are easy to maintain explicitly; for example, some are linear. %Many solvers can efficiently handle linear constraints \citep{X,XX,XXX} \smarttodo{get examples or drop}. 
We can then maintain feasibility for a subset of the constraints, the contours of the $\phis$ are simplified, and as we show soon, the threshold penalty parameter $\sigma^*$ is decreased. We discuss the case where some of the constraints are linear, but it is possible to extend the theory to any type of constraint.

Consider the problem
\begin{equation}
    \label{eq:nlp-exp}
    \tag{NP-EXP}
    \minimize{x \in \R^n} \enspace f(x) \enspace\st\enspace c(x)=0, \enspace B\T x = d,
\end{equation}
where we have nonlinear constraints $c(x) \in \R^{m_1}$ and linear constraints $B\T x = d$ with $B \in \R^{n \times m_2}$, so that $m_1 + m_2 = m$. We assume that \eqref{eq:nlp-exp} at least satisfies \ref{assump:licq-min}, so that $B$ has full column rank.
We define the penalty function problem to be
\begin{equation*}
%\label{eq:pen-explicit}
\begin{aligned}
    \minimize{x \in \R^n} \quad \phis (x) &:= f(x) - c(x)^T \ys(x) \quad\st\quad B\T x = d,
\\  \bmat{\ys(x) \\ \ws(x)} &:= \argmin_{y,w} \half \norm{A(x) y + Bw - g(x)}^2 + \sigma \bmat{c(x) \\ B\T x - d}^T \bmat{y \\ w},
\end{aligned}
\end{equation*}
which is similar to \eqref{eq:phi} except the linear constraints are not penalized in $\phis(x)$, and the penalty function is minimized subject to the linear constraints. A possibility is to also penalize the linear constraints, while keeping them explicit; however, penalizing the linear constraints in $\phis(x)$ introduces additional nonlinearity, and if all constraints are linear, it makes sense that the penalty function reduces to~\eqref{eq:nlp-exp}.

For a given first- or second-order KKT solution $(\xstar,\ystar)$, the threshold penalty parameter becomes %\smarttodo{If $\Pbar_B P_C \hLag(\xstar,\ystar) P_C \Pbar_B$ is negative definite, then $\sigma^* = 0$ but $\sigmabar < 0$ (which is the actual threshold). Then $\sigmabar$ can actually increase, in the sense that it becomes less negative. However, this is a dumb case anyway, because no penalty parameter is needed anyway. Is it worth mentioning?}
\begin{equation}
    \label{eq:threshold-eq}
    \sigma^* := \half \lambda^+_{\max}\left( \Pbar_B P_C \hLag(\xstar,\ystar) P_C \Pbar_B \right) \leq \half \lambda^+_{\max} \left( P_C \hLag(\xstar,\ystar) P_C \right),
 \end{equation}
where $C(x) = \bmat{A(x) & B}$ is the Jacobian for all constraints. Inequality \eqref{eq:threshold-eq} holds because $\Pbar_B$ is an orthogonal projector. If the linear constraints were not explicit, the threshold value would be the right-most term in \eqref{eq:threshold-eq}. Intuitively, the threshold penalty value decreases by the amount of the top eigenspace of the Lagrangian Hessian that lies in the range of $B^T$, because positive semidefiniteness of $\nabla^2 \phis(\xstar)$ along that space is guaranteed by the underlying solver.

It is straightforward to adapt \cref{thm:threshold} to obtain an analogous exact penalization results for the case with explicit constraints.

% The following result is analogous to \cref{thm:threshold} with the smaller threshold value.
% \begin{btheorem}[Threshold penalty value with explicit constraints]
%   \label{thm:threshold-eq}

%   Suppose $\xbar$ is a first-order necessary KKT point for \cref{eq:pen-explicit}:
%   \begin{align*}
%       B\T \xbar &= d,
%   \\ \nabla \phis(\xbar) &= B \wstar,
% %   \\ p^T \nabla^2 \phis(\xbar) p &\geq 0, \qquad \hbox{for all $p \ne 0$ such that } B^T p = 0,
%   \end{align*}
%   and let $\xstar_1$ and $\xstar_2$ be first-
%   and second-order necessary KKT points respectively for \eqref{eq:nlp-exp}.
%   Then for all $p \neq 0$ such that $B\T p = 0$,
%   \begin{subequations}
%   \begin{align}
%     \label{eq:34}
%     \sigma > \norm{A(\xbar)^T \Pbar_B \Ys(\xbar)}
%       &\quad\Longrightarrow\quad g(\xbar) = A(\xbar) \ys (\xbar) + B \ws(\xbar),
%                             \quad c(\xbar) = 0;
% \\  \label{eq:35}
%     \sigma \ge \norm{\Pbar_B A(\xstar_1) \Ys(\xstar_1)^T}
%       &\quad\Longrightarrow\quad \sigma \ge \sigma^*;
% \\  \label{eq:36}
%     p^T \nabla^2\phis(\xstar_2) p \succeq 0
%       &\quad\!\Longleftrightarrow\quad \sigma \ge \sigma^*.
%   \end{align}
%   \end{subequations}
%   If $\xstar_2$ is second-order sufficient, the inequalities
%   in~\eqref{eq:36} hold strictly.
% \end{btheorem}

%The proof of the theorem, and details of evaluating the penalty function with explicit constraints, are given in \cref{app:explicit-constraints}.

%%% Local Variables:
%%% mode: latex
%%% TeX-master: "paper"
%%% End:

\section{Regularization}

\label{sec:regularization}
Even if $A(\xstar)$ has full column rank, $A(x)$ might have low column rank or small singular values away from the solution. If $A(x)$ is rank-deficient and $c(x)$ is not in the range of $A(x)^T$, then $\ys(x)$ and $\phis(x)$ are undefined. Even if $A(x)$ has full column rank but is close to rank-deficiency, the linear systems \eqref{eq:aug-Yu}--\eqref{eq:aug-mult} and \eqref{eq:aug-proj} are ill-conditioned, threatening inaccurate solutions and impeded convergence.

We modify $\phis$ by changing the definition of the multiplier estimates in \eqref{eq:3} to solve a regularized shifted least-squares problem with regularization parameter $\delta > 0$:
\begin{subequations}
\begin{align}
  \label{eq:pen-reg}
    \phis(x;\delta) &:= f(x) - c(x)\T \ys(x;\delta)
\\  \ys(x;\delta) &:= \argmin_y \ \half\norm{A(x)y - g(x)}_2^2 + \sigma c(x)\T y + \half \delta^2 \norm{y}_2^2.
\end{align}
\end{subequations}
This modification is similar to the exact penalty function of \cite{PillGrip:1986}.
The regularization term $\half \delta^2 \norm{y}_2^2$ ensures that the multiplier estimate $\ys(x;\delta)$ is always defined even when $A(x)$ is rank-deficient. The only computational change is that the $(2,2)$ block of the matrices in \eqref{eq:aug-Yu}--\eqref{eq:aug-mult} and \eqref{eq:aug-proj} is now $-\delta^2 I$.

Besides improving $\hbox{cond}(\Kscr)$, $\delta>0$ has the advantage of making $\Kscr$ symmetric quasi-definite. \cite{Vand:1995} shows that any symmetric permutation of such a matrix possesses an $L D L^T$ factorization with $L$ unit lower triangular and $D$ diagonal indefinite. Result~$2$ of \cite{GillSaunShin:1996} implies that the factorization is stable as long as $\delta$ is sufficiently far from zero. Various authors propose regularized matrices of this type to stabilize optimization methods in the presence of degeneracy. In particular, \cite{Wright:1998} accompanies his discussion with an update scheme for $\delta$ that guarantees fast asymptotic convergence.

We continue to assume that~\eqref{eq:nlp} satisfies~\ref{assump:c3}, but we now replace~\ref{assump:licq-all} by~\ref{assump:licq-min}. %\ref{assump:c3}
For a given isolated local minimum $\xstar$ of \eqref{eq:nlp}, $\sigma$ sufficiently large, %and $\delta$ sufficiently small, 
define 
\begin{equation*}
    x(\delta) \in \textstyle\arg\min_x \norm{x - \xstar} \ \mbox{ such that $x$ is a local-min of } \phis(x; \delta)
\end{equation*}
for use as an analytical tool in the upcoming discussion. Although the above argmin may be set-valued, \cref{thm:reg-continuity} shows that for sufficiently small $\delta$, $x(\delta)$ is unique.%\todo{Add sentence referencing Theorem 7}
 
Note that for $\delta > 0$, we would not expect that $x(\delta) = \xstar$, but we want to ensure that $x(\delta) \rightarrow \xstar$ as $\delta \rightarrow 0$. Note that for $x$ such that $\ys(x)$ is defined,
\begin{align*}
    \ys(x;\delta) &= (A(x)\T A(x) + \delta^2 I)^{-1} A(x)\T A(x) \ys(x)
\\  &= \ys(x) - \delta^2 (A(x)\T A(x) + \delta^2 I)^{-1} \ys(x).
\end{align*}
Therefore for $x$ such that $\phis(x)$ is defined, we can write the
regularized penalty function as a perturbation of the unregularized
one: %, so that
\begin{align}
  \label{eq:perturb-phi}
    \phis(x;\delta) &= f(x) - c(x)\T \ys(x; \delta) \nonumber
\\  &= f(x) - c(x)\T \ys(x) + \delta^2 c(x)\T (A(x)\T A(x) + \delta^2 I)^{-1} \ys(x) \nonumber
\\  &= \phis(x) + \delta^2 P_{\delta}(x),
\end{align}
where $P_{\delta}(x) := c(x)\T (A(x)^T A(x) + \delta^2 I)^{-1} \ys(x)$. By~\ref{assump:c3}, $P_{\delta}$ is bounded and has at least two continuous derivatives in a neighbourhood of $\xstar$. 

%For ease of exposition, we will assume in this section that $f$ and $c$ are $\Cscr_3$ functions, so that $\phi (x;\delta)$ is $\Cscr_2$.

\begin{theorem}
\label{thm:reg-continuity}
Suppose~\ref{assump:c3} and~\ref{assump:licq-min} are satisfied, $\xstar$ is a second-order KKT point for \eqref{eq:nlp}, and $\nabla^2 \phis(\xstar) \succ 0$. Then there exists $\deltabar > 0$ such that $x(\delta)$ is a $\Cscr_1$ function for $0 \le \delta < \deltabar$. In particular, $\norm{x(\delta) - \xstar} = O(\delta)$.
\end{theorem}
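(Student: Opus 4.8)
The plan is to reduce the claim to a standard application of the implicit function theorem (IFT) to the stationarity condition $\nabla_x \phis(x;\delta) = 0$, treating $\delta$ as a parameter and $x$ as the unknown. The regularized penalty $\phis(x;\delta) = \phis(x) + \delta^2 P_\delta(x)$ from \eqref{eq:perturb-phi} is, by the remark following that display, twice continuously differentiable in $x$ near $\xstar$ under \ref{assump:c3}, and I would first argue it is also jointly smooth in $(x,\delta)$ on a neighborhood of $(\xstar,0)$ — this is where I must be careful, since $P_\delta$ depends on $\delta$ both through the explicit $\delta^2$ factor and through the resolvent $(A\T A + \delta^2 I)^{-1}$. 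Because $A(\xstar)$ has full column rank (by \ref{assump:licq-min} at the isolated minimizer $\xstar$), the resolvent is a smooth matrix-valued function of $\delta^2$ in a neighborhood of $\delta = 0$, so $\delta^2 P_\delta(x)$ and all its needed derivatives extend continuously to $\delta = 0$, where the perturbation and its first $x$-derivative vanish.

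The core computation is to define $F(x,\delta) := \nabla_x \phis(x;\delta)$, observe $F(\xstar,0) = \nabla\phis(\xstar) = 0$ since $\xstar$ is a first-order KKT point (\cref{def:kkt-1}), and evaluate the Jacobian $\partial_x F(\xstar,0) = \nabla^2\phis(\xstar)$. The hypothesis $\nabla^2\phis(\xstar) \succ 0$ makes this Jacobian nonsingular, so the IFT furnishes a $\deltabar > 0$ and a unique $\Cscr_1$ map $\delta \mapsto x(\delta)$ on $[0,\deltabar)$ with $F(x(\delta),\delta) = 0$ and $x(0) = \xstar$. The strict positive definiteness persists in a neighborhood by continuity, so each such stationary point is in fact a local minimizer of $\phis(\cdot;\delta)$; combined with the local uniqueness from the IFT, this identifies the IFT branch with the nearest-local-min selection defining $x(\delta)$, which justifies that the set-valued $\arg\min$ collapses to this single $\Cscr_1$ branch for small $\delta$.

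For the rate $\norm{x(\delta) - \xstar} = O(\delta)$, I would differentiate the identity $F(x(\delta),\delta) \equiv 0$ and estimate $x'(\delta)$ near $\delta = 0$ via $x'(\delta) = -\,[\partial_x F]^{-1}\,\partial_\delta F$. The key point is the size of $\partial_\delta F = \partial_\delta \nabla_x(\delta^2 P_\delta)$; the explicit factor $\delta^2$ gives a leading term of order $\delta$, and the resolvent's dependence on $\delta^2$ contributes only higher order, so $\partial_\delta F = O(\delta)$ uniformly near $\xstar$. Since $[\partial_x F]^{-1}$ is bounded near $(\xstar,0)$, we get $\norm{x'(\delta)} = O(\delta)$, and integrating from $0$ yields $\norm{x(\delta) - \xstar} = O(\delta^2)$ — in fact a sharper bound than stated, which comfortably implies the claimed $O(\delta)$.

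**The main obstacle** I anticipate is the regularity of the perturbation $\delta^2 P_\delta(x)$ as a function of $(x,\delta)$ across $\delta = 0$, specifically ensuring the gradient $\nabla_x(\delta^2 P_\delta)$ and its $\delta$-derivative behave as the naive counting of $\delta$-powers suggests. This requires that $(A(x)\T A(x) + \delta^2 I)^{-1}$, and the $x$-derivatives of $\ys(x)$ and of $c(x)$ appearing when one differentiates $P_\delta$, all stay bounded uniformly on a fixed neighborhood of $\xstar$ as $\delta \to 0$ — which holds precisely because full rank of $A(\xstar)$ gives a uniform lower bound $\sigma_{\min}(A(x)) \geq \lambda > 0$ on a small enough neighborhood. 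Establishing this uniform control, rather than the IFT mechanics itself, is the technical heart of the argument.
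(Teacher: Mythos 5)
Your proposal takes essentially the same route as the paper: the paper's entire proof is a one-line invocation of the Implicit Function Theorem applied to $\nabla \phis(x;\delta) = 0$ at $(\xstar,0)$, with $\nabla^2\phis(\xstar)\succ 0$ supplying the nonsingular Jacobian, and your write-up simply supplies the joint-smoothness, branch-identification, and uniform-bound details that the paper leaves implicit. Your sharper conclusion $\norm{x(\delta)-\xstar}=O(\delta^2)$ is also correct (and consistent with the stated $O(\delta)$), since by \eqref{eq:perturb-phi} the perturbation depends on $\delta$ only through $\delta^2$ and its $x$-gradient is uniformly bounded near $\xstar$ where $A$ has full column rank.
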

\begin{proof}
  The theorem follows from the Implicit Function Theorem \citep[Theorem 5.2.4]{Ortega2000} applied to $\nabla \phis (x;\delta) = 0$.
\end{proof}
% \begin{proof}
% We use the Implicit Function Theorem \cite{XX}\smarttodo{Need reference}. Since $f$ and $c$ are $\Cscr_3$ functions, we show that $\phis(x;\delta)$ is twice continuously differentiable where it is defined. Note that $\phis(x;\delta)$ (and its derivatives) exist for $\delta > 0$. Since \eqref{eq:nlp} satisfies LICQ and $\phis(\xstar) \succ 0$, there exists an open set $B$ such that $\xstar \in B$ and $\nabla^2 \phis(x) = \nabla^2 \phis(x;0)$ exists and is positive definite for all $x \in B$. This implies that $\xstar$ is the only local minimum of $\phis$ in $B$. Let $$\Bprime = \{x \mid \|x - \xstar\| < \|x - \xbar\|,\, \forall \xbar \mbox{ such that } \nabla \phis(\xbar) = 0 \},$$
% and define $\Bbar = B \cap \Bprime$. We know that $\Bbar \neq \emptyset$ because $\nabla^2 \phis(\xstar) \succ 0$.

% Note that $x(\delta)$ is defined as the $x \in \Bbar$ such that $\nabla \phis(x;\delta) = 0$. By the Implicit Function Theorem, there exists a $\deltabar$ such that for $\delta < \deltabar$, $z = z(\delta)$ is a continuous function in $\delta$.
% \end{proof}

An option to recover $\xstar$ using $\phis(x;\delta)$ is to minimize a sequence of problems defined by $x_{k+1} = \argmin_x \phis(x;\delta_k)$ with $\delta_k \rightarrow 0$, using $x_k$ to warm-start the next subproblem. However, we show that it is possible to solve a single subproblem by decreasing $\delta$ during the subproblem iterations, while retaining fast local convergence.

%\smarttodo{From here on, should we write \(\nabla f\) and \(\nabla^2 f\) to avoid a clash of notation with \(g\) and \(H\)?}
To keep results independent of the minimization algorithm being
used, for a family of functions $\mathcal{F}$ we define
$G: \mathcal{F} \times \R^n \rightarrow \R^n$ such that for
$\varphi \in \mathcal{F}$ and an iterate $x$, $G(\varphi,x)$ computes an
update direction. For example, if $\mathcal{F} = \Cscr_2$, we can
represent Newton's method with $G(\varphi,x) = - (\nabla^{2}\varphi(x))^{-1} \nabla \varphi(x)$. %, where
%$g(x) = \nabla f(x)$ and  $H(x) = \nabla^2 f(x)$.  
Define
$\nu(\delta)$ as a function such that for repeated applications,
$\nu^k (\delta) \rightarrow 0$ as $k \rightarrow \infty$ at a chosen
rate; for example, for a quadratic rate, we let
$\nu(\delta) = \delta^2$.
%\smarttodo{Simpler to use \(\delta^\theta\) instead of \(\nu(\delta)\)?}

\Cref{alg:regularized} describes how to adaptively update $\delta$ each iteration.
%\smarttodo{In the algorithm, \(\nu\) must be consistent with \(G\) somehow?!}

\begin{algorithm}
\caption{Minimization of the regularized penalty function $\phis(x,\delta)$ with $\delta \rightarrow 0$}
\label{alg:regularized}
\begin{algorithmic}[1]
\State Choose $x_1$, $\delta_0 < 1$
\For{$k=1,2,\dots$}
  \State Set
    \begin{equation}
      \label{eq:delta-update}
      \delta_k \gets \max \left\{ \min \left\{ \norm{\nabla \phis(x_k;\delta_{k-1})}, \delta_{k-1} \right\}, \nu (\delta_{k-1}) \right\}
    \end{equation}
  \State $p_k \gets G\left( \phis(\cdot, \delta_k), x_k \right)$
  \State $x_{k+1} \gets x_k + p_k$
\EndFor
\end{algorithmic}
\end{algorithm}

In order to analyze \Cref{alg:regularized}, we formalize the notions of rates of convergence using definitions equivalent to those of \citet[\S 9]{Ortega2000}.

\begin{definition}
  We say that $x_k \rightarrow \xstar$ with order at least
  $\tau > 1$ if there exists $M > 0$ such that, for all sufficiently large \(k\),
  \(\norm{x_{k+1} - \xstar} \le M \norm{x_k - \xstar}^\tau\).
  We say that
  $x_k \rightarrow \xstar$ with $R$-order at least $\tau > 1$ if
  there exists a sequence $\alpha_k$ such that, for all sufficiently large \(k\),
  $$ \norm{x_{k} - \xstar} \le \alpha_k, \qquad \alpha_k \rightarrow 0 \mbox{ with order at least $\tau$.} $$
\end{definition}

We first show that any minimization algorithm achieving a certain
local rate of convergence can be regarded as \emph{inexact Newton}
\citep{DembEiseStei:1982}.
\begin{lemma}
\label{lem:inexact-newton}
Let $\varphi(x)$ be a $\Cscr_2$ function with local minimum $\xstar$ and
$\nabla^2 \varphi(\xstar) \succ 0$. Suppose we minimize $\varphi$ according to
\begin{equation}
  \label{eq:G-iter}
  x_{k+1} = x_k + p_k, \qquad p_k = G(\varphi, x_k),
\end{equation}
such that $x_k \rightarrow \xstar$ with order at least
$\tau \in (1,2]$. Then in some
neighborhood of $\xstar$, the update procedure $G(\varphi, x)$ is equivalent
to the inexact-Newton iteration
\begin{equation}
  \label{eq:newton}
  x_{k+1} \gets x_k + p_k, \qquad \nabla^2 \varphi(x_k) p_k = -\nabla \varphi(x_k) + r_k, \qquad \norm{r_k} = O(\norm{\nabla \varphi(x_k)}^\tau).
\end{equation}
\end{lemma}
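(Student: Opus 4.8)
The plan is to take the inexact-Newton residual as a \emph{definition} rather than something to be solved for: given the step $p_k = G(\varphi,x_k)$ produced by the iteration~\eqref{eq:G-iter}, simply set $r_k := \nabla^2\varphi(x_k)p_k + \nabla\varphi(x_k)$, so that the middle relation in~\eqref{eq:newton} holds identically. The whole content of the lemma then reduces to the single estimate $\norm{r_k} = O(\norm{\nabla\varphi(x_k)}^\tau)$, which must be extracted from the assumed order-$\tau$ convergence of $\seq{x_k}$. Before doing so I would record a preliminary equivalence valid in a neighborhood of $\xstar$: since $\nabla\varphi(\xstar)=0$, $\nabla^2\varphi$ is continuous, and $\nabla^2\varphi(\xstar)\succ0$, the mean-value identity $\nabla\varphi(x)=\bigl(\int_0^1\nabla^2\varphi(\xstar+t(x-\xstar))\,dt\bigr)(x-\xstar)$ shows that $\norm{\nabla\varphi(x)}$ and $\norm{x-\xstar}$ are comparable up to fixed constants. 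This lets me pass freely between the error $e_k:=x_k-\xstar$ and the gradient norm $\norm{\nabla\varphi(x_k)}$.

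The key algebraic step is to re-express $r_k$ through the integral form of Taylor's theorem applied to $\nabla\varphi$ along the step, $\nabla\varphi(x_{k+1}) = \nabla\varphi(x_k) + \bigl(\int_0^1 \nabla^2\varphi(x_k+tp_k)\,dt\bigr)p_k$. Subtracting this from the defining equation for $r_k$ yields
$$ r_k = \nabla\varphi(x_{k+1}) - \Bigl(\int_0^1 [\nabla^2\varphi(x_k+tp_k) - \nabla^2\varphi(x_k)]\,dt\Bigr)p_k, $$
which splits the residual into a ``gradient at the next iterate'' term and a ``Hessian variation along the step'' term, to be bounded separately.

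For the first term, the order-$\tau$ hypothesis gives $\norm{e_{k+1}} \le M\norm{e_k}^\tau$, and the preliminary equivalence converts this into $\norm{\nabla\varphi(x_{k+1})} = O(\norm{e_{k+1}}) = O(\norm{e_k}^\tau) = O(\norm{\nabla\varphi(x_k)}^\tau)$, precisely the target order. For the second term I would first note $\norm{p_k} \le \norm{e_{k+1}} + \norm{e_k} = O(\norm{e_k})$ (for large $k$, $\norm{e_{k+1}} \le \norm{e_k}$ since $\tau>1$), then invoke Lipschitz continuity of $\nabla^2\varphi$ to get $\norm{\nabla^2\varphi(x_k+tp_k)-\nabla^2\varphi(x_k)} \le Lt\norm{p_k}$; integrating produces a bound of order $\norm{p_k}^2 = O(\norm{e_k}^2)$. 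Since $\tau\le2$ and $\norm{e_k}\to0$, this is $O(\norm{e_k}^\tau)=O(\norm{\nabla\varphi(x_k)}^\tau)$, and summing the two contributions closes the estimate.

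The hard part is the second term, and specifically the smoothness it demands. With only continuity of $\nabla^2\varphi$ the Hessian-variation factor is merely $o(1)$, giving $\norm{r_k}=o(\norm{e_k})$---enough for superlinear behavior but strictly weaker than the claimed $O(\norm{\nabla\varphi(x_k)}^\tau)$ when $\tau<2$. The quadratic bound $O(\norm{e_k}^2)$ genuinely needs a Lipschitz (or at least H\"older) modulus for $\nabla^2\varphi$, which is exactly what~\ref{assump:c2} supplies for $\phis$; I would therefore read the hypothesis ``$\Cscr_2$'' as carrying a locally Lipschitz Hessian. The remaining work is purely bookkeeping: shrinking the neighborhood so that the gradient/error equivalence, the contraction $\norm{e_{k+1}}\le\norm{e_k}$, and the Lipschitz estimate hold simultaneously.
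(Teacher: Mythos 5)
Your proof is correct and takes a genuinely different route from the paper's. The paper compares $p_k$ with the exact Newton step: writing $\nabla^2\varphi(x_k)p_k^N = -\nabla\varphi(x_k)$ and $x_{k+1}^N = x_k + p_k^N$, it observes that $r_k = \nabla^2\varphi(x_k)\left(x_{k+1} - x_{k+1}^N\right)$, bounds $\norm{x_{k+1} - x_{k+1}^N} \le \norm{x_{k+1}-\xstar} + \norm{\xstar - x_{k+1}^N}$ using the order-$\tau$ hypothesis for the first piece and the local quadratic convergence of Newton's method for the second, and finishes with the same gradient/error equivalence you establish (their $\norm{x_k - \xstar} \le M_4 \norm{\nabla\varphi(x_k)}$). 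You never introduce the Newton iterate; instead you Taylor-expand $\nabla\varphi$ along the step and split $r_k$ into $\nabla\varphi(x_{k+1})$ plus a Hessian-variation term, which in effect inlines the proof of Newton's quadratic convergence rather than citing it as a black box. What your route buys is precision about smoothness: it exposes that the Hessian-variation term is $O(\norm{p_k}^2)$ only under a locally Lipschitz Hessian, and that plain $\Cscr_2$ gives merely $o(\norm{x_k - \xstar})$, which is too weak when $\tau < 2$. That caveat is not a defect of your argument relative to the paper's: the paper's proof needs exactly the same strengthening, since quadratic convergence of Newton's method does not follow from $\Cscr_2$ and $\nabla^2\varphi(\xstar) \succ 0$ alone but requires Lipschitz (or H\"older) continuity of the Hessian---consistent with the paper's remark elsewhere that quadratic rates need \ref{assump:c2}, though the lemma's hypotheses do not state it. Your version also reveals the natural refinement: a H\"older-$\alpha$ Hessian yields the conclusion for all $\tau \le 1+\alpha$. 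The one piece you dismiss as ``bookkeeping'' that the paper does carefully is the monotone-decrease argument (choosing $\epsilon < \min\{M_2^{-1/(\tau-1)},1\}$ so that iterates remain in the common neighborhood); it is routine, but it should be written out once since both of your bounds are applied at every $k$.
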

\begin{proof}
There exists a neighborhood $N_N(\xstar)$ such that for $x^N_0 \in N_N(\xstar)$, the Newton update $x^N_{k+1} = x^N_k + p^N_k$ with $\nabla^2 \varphi (x^N_k) p^N_k = - \nabla \varphi(x^N_k)$ converges quadratically:
\[
  \norm{\xstar - x_{k+1}^N} \le M_1 \norm{\xstar - x^N_k}^2, \qquad x_k \in N_N(\xstar).
\]
Let $N_G(\xstar)$ be the neighborhood where order $\tau$ convergence is obtained for~\eqref{eq:G-iter} with constant $M_2$. Let $B_{\epsilon}(\xstar) = \{ x \mid \norm{\xstar - x} \le \epsilon \}$. Set $\epsilon < \min\{ M_2^{-1/(\tau-1)}, 1 \}$ such that $B_{\epsilon}(\xstar) \subseteq N_N \cap N_G$, and observe that if $x_0 \in B_{\epsilon}(\xstar)$, then $x_k \in B_{\epsilon}(\xstar)$ for all $k$ because $\norm{\xstar - x_k}$ is monotonically decreasing, because $M_2 \norm{\xstar - x_0}^{\tau-1} < 1$ and so by induction
\[
    \norm{\xstar - x_k} \le M_2 \norm{\xstar - x_{k-1}}^{\tau} = M_2 \norm{\xstar - x_{k-1}}^{\tau-1} \norm{\xstar - x_{k-1}} < \norm{\xstar - x_{k-1}}.
\]
By continuity of \(H(x)\), there exists \(M_3 > 0\) such that $\norm{H(x)} \le M_3$ for all $B_{\epsilon}(\xstar)$. Then for $x_k \in B_{\epsilon}(\xstar)$,
\begin{align*}
    \norm{r_k} = \norm{\nabla^2 \varphi(x_k) p_k + \nabla \varphi(x_k)} &= \norm{\nabla^2 \varphi(x_k) \left( x_{k+1} - x_k - p^N_k\right)}
\\  &\le \norm{\nabla^2 \varphi(x_k)}  \norm{x_{k+1} - \xstar + \xstar - x^N_{k+1}}
\\  &\le M_3 ( \norm{x_{k+1} - \xstar} + \norm{x^N_{k+1} - \xstar})
\\  &\le M_3 (M_1 + M_2) \norm{x_k - \xstar}^\tau,
\end{align*}
Now, because $\varphi \in \Cscr_2$ and $\nabla^2 \varphi(\xstar) \succ 0$,
%\smarttodo{Needs more explanation}
there exists a constant $M_4$ such that
$\norm{x_k - \xstar} \le M_4 \norm{\nabla \varphi(x_k)}$ for
$x_k \in N_G(\xstar) \cap N_N(\xstar)$. Therefore
$\norm{r_k} \le M_4 M_3 (M_1 + M_2) \norm{\nabla \varphi(x_k)}^\tau$, which is the
inexact-Newton method, convergent with order $\tau$.
\end{proof}
Note that \cref{lem:inexact-newton} can be modified to accommodate any form of superlinear convergence, as long as $\norm{r_k}$ converges at the same rate as $x_k \rightarrow \xstar$. %for492\smarttodo{should also be possible to extend to $\Cscr_1$, as long as $H(\xstar)$ exists, and $\|B(x) - H(\xstar)\| \in O(h)$ if $\|x - \xstar\| \in O(h)$.}
\begin{btheorem}
\label{thm:regularized}
Suppose that~\ref{assump:c3} and~\ref{assump:licq-min} are satisfied, $\xstar$ is a second-order KKT point for \eqref{eq:nlp}, $\nabla^2 \phis(\xstar) \succ 0$, and there exists $\deltabar$ and an open set $B(\xstar)$ containing $\xstar$ such that for $\widetilde{x}_0 \in B(\xstar)$ and $0 < \delta \leq \deltabar$, the sequence defined by $\widetilde{x}_{k+1} = \widetilde{x}_k + G(\phis(\cdot; \delta), \widetilde{x}_k)$ converges quadratically to $x(\delta)$:
%\smarttodo{Mustn't there be assumptions on \(G\) here?}
\[
  \norm{x(\delta) - \widetilde{x}_{k+1}} \le M_{\delta} \norm{x(\delta) - \widetilde{x}_k}^2.
\]
Further suppose that for $\delta \leq \deltabar$, $M_\delta \leq M$ is uniformly bounded.
Then there exists an open set, $B^{\prime}(\xstar)$ that contains $\xstar$, and $0 < \deltaprime < 1$ such that if $x \in \Bprime(\xstar)$, $\delta \leq \deltaprime$ and $x_k \rightarrow \xstar$ for $x_k$ defined by \cref{alg:regularized} (with $\nu(\delta) = \delta^2$), then $x_k \rightarrow \xstar$ R-quadratically.
\end{btheorem}
%\todo{Should be able to remove $\Cscr_3$ assumption if comparison is done against Hessian approx instead of inexact Newton with true Hessian. Use Fletcher's result that Hessian approx gives quadratic/superlinear convergence}

The proof is in \cref{app:regularized-proof}. Although there are many technical assumptions, the takeaway message is that we need only minimize $\phis(\cdot;\delta_k)$ until $\norm{\nabla \phis} = O(\delta_k)$, because under typical smoothness assumptions we have that
$\norm{x(\delta) - \xstar} = O(\delta)$ for $\delta$ sufficiently small. Decreasing $\delta$ at the same rate as the local convergence rate of the method on a fixed problem should not perturb $\phis(x;\delta)$ too much, therefore allowing for significant progress on the perturbed problem in few steps. %Using $\delta>0$ would not destroy the smoothness properties of the unregularized function (in fact, it should only improve them), 
The assumption that $M_\delta \le M$ uniformly also appears strong, but we believe it is unavoidable---the number of iterations between updates to $\delta$ must be bounded above by a constant for overall convergence to be unimpeded.
Within the basin of convergence and for a fixed $\delta>0$, an optimization method would achieve the same local convergence rate that it would have with $\delta = 0$ fixed.

\cref{thm:regularized} can be generalized to superlinear rates of convergence using a similar proof. As long as $\nu(\cdot)$ drives $\delta \rightarrow 0$ as fast as the underlying algorithm would locally converge for fixed $\delta$, local convergence of the entire regularized algorithm is unchanged. %\smarttodo{Include brief discussion of Augmented Lagrangian version?}

\section{Inexact evaluation of the penalty function}
\label{sec:inexact}
We discuss the effects of solving~\eqref{eq:aug-generic}
approximately, and thus evaluating $\phis$ and its derivatives
inexactly. Various optimization solvers can utilize inexact function
values and derivatives while ensuring global convergence and certain
local convergence rates, provided the user can compute relevant
quantities to a prescribed accuracy. For example, \citet[\S
8--9]{ConnGoulToin:2000} describe conditions on the inexactness of
model and gradient evaluations to ensure convergence, and
\citet{HeinVice:2001} and \citet{KourHeinRidzBloe:2014} describe inexact trust-region SQP solvers for
using inexact function values and gradients respectively. We focus on inexactness within trust-region methods for
optimizing $\phis$.

The accuracy and computational cost in the evaluation of $\phis$ and its derivatives depends on the accuracy of the solves of \eqref{eq:aug-generic}.  If the cost to solve \eqref{eq:aug-generic}
depends on solution accuracy (e.g., with iterative linear solvers), it is advantageous to consider optimization solvers that use inexact computations,
%In particular, systems \eqref{eq:aug-mult}--\eqref{eq:aug-pair} are used to compute $\phis$ and $\nabla \phis$. We are primarily interested in the use of iterative methods to solve \eqref{eq:aug-generic}, because we can more easily control the accuracy of the linear system solve than when direct methods are used. 
%%The cost of solving the linear system is proportional to the accuracy of the solution, and the cost of solving \eqref{eq:aug-generic} is the dominant cost of evaluating $\phis$ and its derivatives; 
%it is therefore advantageous to consider solvers that use inexact computations to reduce iteration costs.
especially for large-scale problems.
%, where solving~\eqref{eq:aug-generic} accurately every iteration may be too expensive.

Let $\Sscr \subseteq \R^n$ be a compact set. In this section, we use
$\wphis(x)$, $\nabla \wphis(x)$, etc.\ to
distinguish the inexact quantities from their exact
counterparts. We also drop the arguments from operators as in
\cref{sec:comp-penalty-funct}. We consider three quantities that are
computed inexactly: $\gs$, $\phis$ and $\nabla \phis$. For given positive error
tolerances $\eta_i$ (which may be relative to their corresponding quantities), we are interested in exploring termination
criteria for solving \eqref{eq:aug-generic} to ensure that the
following conditions hold for all $x \in \Sscr$: \begingroup
\allowdisplaybreaks
\begin{subequations} \label{eq:inexact-evals}
  \begin{align}
      \bigl| \phis - \wphis \bigr| &\leq M \eta_1, \label{eq:inexact-fval}
  \\  \bigl\| \nabla \phis - \nabla \wphis \bigr\| &\leq M \eta_2,  \label{eq:inexact-grad}
  \\  \norm{\gs - \wgs} &\le M \eta_3, \label{eq:inexact-gL}
  \end{align}
\end{subequations}
\endgroup where $M>0$ is some fixed constant (which may or may not be
known). \cite{KourHeinRidzBloe:2014} give a trust-region method using
inexact objective value and gradient information that guarantees
global convergence provided
\eqref{eq:inexact-fval}--\eqref{eq:inexact-grad} hold without
requiring that $M$ be known a priori. We may compare this to the
conditions of \citet[\S 8.4, \S 10.6]{ConnGoulToin:2000}, which
require more stringent conditions on
\eqref{eq:inexact-fval}--\eqref{eq:inexact-grad}. They require
that $\eta_2 = \norm{\nabla \wphis}$ and that $M$ be known and fixed
according to parameters in the trust-region method.
% \begin{subequations} \label{eq:inexact-evals-cgt}
%   \begin{align}
%       \left| \phis(x) - \wphis(x) \right| &\leq \eta_1, \label{eq:inexact-fval-cgt}
%   \\  \left\| \nabla \phis(x) - \nabla \wphis(x) \right\| &\leq \widebar{M} \|\nabla \wphis(x) \|,  \label{eq:inexact-grad-cgt}
%   \end{align}
% \end{subequations}
% where now $\widebar{M} > 0$ is a known fixed constant determined by parameters in the trust-region method. Algorithms relying on \eqref{eq:inexact-evals} tend to include the trust-region radius in the definition of $\eta_i$, and so if $M$ is large then this is compensated for by reducing the trust-region radius. Although global and local rates of convergence are theoretically preserved, large values of $M$ can lead to slow performance if many trust-region steps are rejected. \todo{too hand-wavy?}

This leads us to the following proposition, which allows us to bound the residuals of \eqref{eq:aug-Yu} and \eqref{eq:aug-mult} to ensure \eqref{eq:inexact-evals}.

%\pagebreak

\begin{proposition}
\label{prop:inexact-1}
Let $\Sscr$ be a compact set, and suppose that $\sigma_{\min}(A(x)) \geq \lambda > 0$ for all $x \in \Sscr$. Then for $x \in \Sscr$, if
\begin{alignat}{4}
    \norm{r_1} &= \left\| \Kscr \bmat{\wgs \\ \wys} - \bmat{g \\ \sigma c} \right\| &&\leq \min \{1, \norm{c}^{-1} \} \cdot \min \{\eta_1, \eta_3 \}, \label{eq:resid-1}
\end{alignat}
then \eqref{eq:inexact-fval} and \eqref{eq:inexact-gL} hold for some constant $M$.  Also, if
\begin{equation}
    \norm{r_1} \le \eta_2 \text{ and }
    \norm{r_2} = \left\| \Kscr \bmat{\widetilde{v} \\ \widetilde{w}} - \bmat{0 \\ c} \right\|
    \le \min \{1, \eta_2\}, \label{eq:resid-2}
\end{equation}
then \eqref{eq:inexact-grad} holds for some (perhaps different) constant $M$.
\end{proposition}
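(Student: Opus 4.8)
The plan is to exploit the fact that every inexact quantity is obtained from a solve governed by the single matrix $\Kscr$ (with $\delta=0$ here), so that all errors trace back to the residuals $r_1$ and $r_2$ through $\Kscr^{-1}$. First I would record the two standing boundedness facts on which everything rests. Since $\sigma_{\min}(A(x)) \ge \lambda > 0$ on the compact set $\Sscr$, the matrix $\Kscr$ is invertible there, and its explicit block inverse (whose blocks are built from $A$ and $(A\T A)^{-1}$, with $\norm{(A\T A)^{-1}} \le \lambda^{-2}$) together with continuity of $A$ gives a uniform bound $\norm{\Kscr^{-1}} \le \kappa$ on $\Sscr$. By the smoothness assumptions $g$, $A$, $H$, the $H_i$, $c$, and hence $\ys$, $\gs$, $\Hs$, and $\Ss$ are continuous, so each is bounded on $\Sscr$. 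Consequently any solve error is controlled by its residual: since $\Kscr$ applied to the difference of the inexact and exact solutions returns exactly the residual, $\norm{[\wgs;\wys]-[\gs;\ys]} \le \kappa\norm{r_1}$ and $\norm{[\widetilde v;\widetilde w]-[v;w]} \le \kappa\norm{r_2}$.

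For the first claim I would use $|\phis - \wphis| = |c\T(\ys - \wys)| \le \norm{c}\,\kappa\norm{r_1}$ together with $\norm{\gs - \wgs} \le \kappa\norm{r_1}$. The composite threshold $\min\{1,\norm{c}^{-1}\}\min\{\eta_1,\eta_3\}$ is designed precisely so both bounds close at once: the factor $\min\{1,\norm{c}^{-1}\}$ guarantees $\norm{c}\norm{r_1} \le \min\{\eta_1,\eta_3\} \le \eta_1$, giving \eqref{eq:inexact-fval}, while discarding that factor leaves $\norm{r_1} \le \eta_3$, giving \eqref{eq:inexact-gL}; in both cases $M=\kappa$ suffices.

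The second claim is the substantive part. I would write $\nabla\phis - \nabla\wphis = (\gs - \wgs) - (\Ys c - \widetilde{\Ys c})$, bound the first term by $\kappa\norm{r_1} \le \kappa\eta_2$, and expand the second via \autoref{alg:Yu}: with $(v,w)$ the exact solution of \eqref{eq:aug-Yu} for $u=c$ and $(\widetilde v,\widetilde w)$ its inexact counterpart, $\Ys c = [\Hs-\sigma I]v - \Ss\T w$ while $\widetilde{\Ys c} = [\wHs-\sigma I]\widetilde v - \wSs\T\widetilde w$. Telescoping isolates one error source per term,
\[
  \Ys c - \widetilde{\Ys c}
  = [\Hs-\sigma I](v-\widetilde v) + (\Hs-\wHs)\widetilde v
    - \Ss\T(w-\widetilde w) - (\Ss-\wSs)\T\widetilde w .
\]
The two linear-solve terms are $O(\norm{r_2})$ (via $\kappa$ and boundedness of $\Hs,\Ss$), and the two operator-formation terms are $O(\norm{r_1})$: indeed $\Hs-\wHs = \sum_i((\wys)_i-(\ys)_i)H_i$ is controlled by $\norm{\wys-\ys}\le\kappa\norm{r_1}$, and $\Ss-\wSs = S(x,\gs-\wgs)$ by linearity of $S$ in its second argument, controlled by $\norm{\gs-\wgs}\le\kappa\norm{r_1}$. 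The role of the ``$1$'' in $\min\{1,\eta_2\}$ on $\norm{r_2}$ is exactly to keep $\widetilde v,\widetilde w$ bounded (once $\norm{r_2}\le 1$ one has $\norm{[\widetilde v;\widetilde w]} \le \kappa\norm{c}+\kappa$), so the operator-formation terms carrying $\widetilde v,\widetilde w$ factors stay $O(\eta_2)$. Collecting the bounds under $\norm{r_1}\le\eta_2$ and $\norm{r_2}\le\min\{1,\eta_2\}$ gives $\norm{\nabla\phis-\nabla\wphis}\le M\eta_2$ with $M$ aggregating $\kappa$ and the operator bounds.

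The main obstacle is the bookkeeping in this last step: unlike the function value, the inexact gradient inherits error not only from the solve \eqref{eq:aug-Yu} but also from forming $\wHs$ and $\wSs$ with the inexact $\wys$ and $\wgs$, so I must verify that every propagated error is still linear in $\norm{r_1}$ or $\norm{r_2}$ and that no genuinely quadratic-in-the-error term escapes the boundedness of $\widetilde v,\widetilde w$. Arranging the telescoping so each term carries a single error factor, and invoking the uniform bounds from the first paragraph, is what makes all constants collapse into a single $M$.
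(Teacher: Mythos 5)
Your proposal is correct and follows essentially the same route as the paper's proof: a uniform bound on $\norm{\Kscr^{-1}}$ from compactness and $\sigma_{\min}(A)\ge\lambda$, the bound $|\phis-\wphis|\le\norm{c}\,\norm{\ys-\wys}$ with the $\min\{1,\norm{c}^{-1}\}$ factor absorbing $\norm{c}$, and for the gradient the identical telescoping $\Hs v-\wHs\widetilde v=\Hs(v-\widetilde v)+(\Hs-\wHs)\widetilde v$ (and its $\Ss$ analogue) with $\Hs-\wHs$ and $\Ss-\wSs$ controlled linearly by $\norm{\ys-\wys}$ and $\norm{\gs-\wgs}$ via $h_i=\norm{H_i}$. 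Your explicit justification that $\norm{r_2}\le 1$ keeps $\widetilde v,\widetilde w$ uniformly bounded is a small refinement of a point the paper merely asserts, but it does not change the argument.
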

\begin{proof}
Because $\Sscr$ is compact and $\lambda>0$, there exists $\lambdabar > 0$ such that $\norm{\Kscr}$, $\norm{\Kscr^{-1}} \le \lambdabar$ for all $x \in \Sscr$. Thus,~\eqref{eq:inexact-gL} follows directly from~\eqref{eq:aug-mult} and \eqref{eq:resid-1} with \(M = \lambdabar\).
Similarly,
\begin{align*}
    \bigl| \phis - \wphis \bigr| = \left| c^T \left( \ys - \widetilde{\ys} \right) \right|
  \leq \norm{c} \left\| \ys - \widetilde{\ys} \right\| \leq \lambdabar \eta_1,
\end{align*}
and~\eqref{eq:inexact-fval} holds with \(M = \lambdabar\).
We apply a similar analysis to ensure that \eqref{eq:inexact-grad} holds.
Define the vector
$h \in \R^m$ such that $h_i = \norm{H_i}$. Define $v$, $w$ as the solutions to \eqref{eq:resid-2} for $r_2 = 0$, so that from \eqref{eq:resid-2} we have
\begingroup
\allowdisplaybreaks
\begin{align*}
    \norm{\nabla \phis - \nabla \wphis} &\le \norm{\gs - \wgs} + \norm{\Ys c - \wYs c}
\\  &\le \lambdabar \eta_2 + \norm{(\Hs - \sigma I)v - \Ss\T w - (\wHs - \sigma I)\widetilde{v} + \wSs\T \widetilde{w}}
\\ &\le \lambdabar \eta_2 + \sigma \norm{v - \widetilde{v}} + \norm{\Hs v - \wHs \widetilde{v}} + \norm{\Ss^T w - \wSs\T \widetilde{w}}
\\ &\le \left(\lambdabar + \sigma \lambdabar \right) \eta_2 + \norm{\Hs (v - \widetilde{v}) + (\Hs - \wHs) \widetilde{v}}
\\ &\phantom{\leq \,\,} + \norm{\Ss\T (w - \widetilde{w}) + (\Ss - \wSs)^T \widetilde{w}}
\\ &\le \left(\lambdabar + \sigma \lambdabar \right) \eta_2 + \norm{\Hs} \norm{v - \widetilde{v}}
     + \biggl\| \sum_{i=1}^m \left((\ys)_i - (\wys)_i \right) H_i \biggr\| \norm{\widetilde{v}}
\\ &\phantom{\leq \,\,} + \norm{\Ss} \norm{w - \widetilde{w}} + \biggl\| \sum_{i=1}^m \widetilde{w}_i H_i\biggr\| \norm{\gs - \wgs}
\\ &\le \left(\lambdabar + \sigma \lambdabar + \norm{\Hs} \lambdabar + \lambdabar \norm{\widetilde{v}} \norm{h}
     + \norm{\Ss}\lambdabar + \norm{\widetilde{w}} \|h\| \lambdabar \right) \eta_2.
\end{align*}
\endgroup
Note that $\norm{\Hs}$, $\norm{h}$, $\norm{\Ss}$, $\|\widetilde{w}\|$ and $\|\widetilde{v}\|$ are bounded uniformly in $\Sscr$.
\end{proof}

In the absence of additional information, using \eqref{eq:inexact-evals} with unknown $M$ may be the only way to take advantage of inexact computations, because computing exact constants (such as the norms $\Kscr$ or the various operators above) is not practical. In some cases the bounds \eqref{eq:inexact-evals} are relative, e.g., $\eta_2 = \min \{ \norm{\nabla \wphis}, \Delta \}$ for a certain $\Delta > 0$. It may then be necessary to compute $\norm{\nabla \wphis}$ and refine the solutions of \eqref{eq:aug-mult} and \eqref{eq:aug-Yu} until they satisfy \eqref{eq:resid-1}--\eqref{eq:resid-2}. However, given the expense of applying these operators, it may be more practical to use a nominal relative tolerance, as in the numerical experiments of \cref{sec:numerical-results}.

We include a (trivial) improvement to \cref{prop:inexact-1} that satisfies \eqref{eq:inexact-fval} and \eqref{eq:inexact-gL} with $M=1$, given additional spectral information on $A$. If we solve \eqref{eq:aug-mult} by via
\begin{equation}
    \label{eq:shift-rhs}
    \bmat{I & A \\ A^T & 0} \bmat{ \Delta \gs \\ \ys} = \bmat{0 \\ \sigma c - A\T g}, \qquad \gs = g + \Delta \gs,
\end{equation}
we can use \LNLQ \citep{EstOrbSaun:2019LNLQ}, a Krylov subspace method for
such systems, which ensures that $\norm{\Delta \gs - \Delta \wgs^{(j)}}$
and $\norm{\ys - \wys^{(j)}}$ are monotonic, where $\Delta \wgs^{(j)}$,
$\wys^{(j)}$ are the $j$th \LNLQ iterates.  Given $\lambda>0$ such
that $\sigma_{\min}(A) \geq \lambda$, \LNLQ can compute cheap
upper bounds on $\norm{\Delta \gs - \Delta \wgs^{(j)}}$ and
$\norm{\ys - \wys^{(j)}}$, allowing us to terminate the solve when
$\norm{\Delta \gs - \widetilde{\Delta \gs}} \le \eta_2 \norm{\widetilde{\Delta \gs} + g}
                                              = \eta_2 \norm{\wgs}$ and
$\norm{\ys - \wys} \le \min \{1, \norm{c}^{-1}\} \eta_1$. Typically,
termination criteria for the optimization solver will include a
condition that $\norm{\gs} \le \epsilon_d$ to determine approximate local
minimizers to~\eqref{eq:nlp}. For such cases, we can instead require
that $\norm{\wgs} \le \tfrac{1}{1+\eta_2}\epsd$, because then
\begin{equation*}
    \norm{\gs} \le \norm{\gs - \wgs} + \norm{\wgs} \le (1+\eta_2)\norm{\wgs} \le \epsd.
\end{equation*}
Similarly, we have
\begin{equation*}
    \bigl| \phis - \wphis \bigr| \le \norm{c} \norm{\ys - \wys} \le \eta_1,
\end{equation*}
which now satisfies \eqref{eq:inexact-fval} with $M=1$.

Although finding suitable bounds $\lambda$ on the smallest singular value may be difficult in general, it is trivially available in some cases because of the way $\Kscr$ is preconditioned (for an example, see \cref{sec:numerical-results}). However, a complication is that if \LNLQ is used with a right-preconditioner $\Pscr \approx A\T A$, then $\norm{\ys - \wys}_{\Pscr}$ is monotonic and \LNLQ provides bounds on the preconditioned norm instead of the Euclidean norm. If $\norm{\Pscr^{-1}}$ can be bounded, then the bound $\norm{\ys - \wys} \le \norm{\ys - \wys}_{\Pscr} \norm{\Pscr^{-1}}$ can be used.%; otherwise inexact methods based on \eqref{eq:inexact-fval} must be used.

\section{Practical considerations}
\label{sec:practical-considerations}

We discuss some matters related to the use of $\phis$ in practice.
In principle, nearly any smooth unconstrained solver can be used to find a local minimum of $\phis$ because it has at least one continuous derivative, and a continuous Hessian approximation if \ref{assump:c2} is satisfied. However, the structure of $\phis$ lends itself more readily to certain optimization methods than to others, especially when the goal of creating a factorization-free solver is kept in mind.

\cite{Fletcher:1973} originally envisioned a Newton-type procedure
\[
  x_{k+1} \gets x_k - \alpha_k B_i^{-1}(x_k) \nabla \phis(x_k), \qquad i=1 \text{or} 2,
\]
where $B_1$, $B_2$ are the Hessian approximations from \eqref{eq:hess-approx} and $\alpha_k > 0$ is a step-size. \citet[Theorem 2]{Fletcher:1973} further proved that superlinear convergence is achieved, or quadratic convergence if the second derivatives of $f$ and $c$ are Lipschitz continuous. However, for large problems it is expensive to compute $B_i$ explicitly and solve the dense system $B_i s_k = -\nabla \phis(x_k)$.

We instead propose using a \cite{Stei:1983} Newton-CG type trust-region solver to minimize $\phis$. First, trust-region methods are preferable to linesearch methods \citep[\S 3--4]{NocedalW:2006} for objectives with expensive evaluations; it is costly to evaluate $\phis$ repeatedly to determine a step-size every iteration as this requires solving a linear system. Further, $\nabla^2 \phis$ is often indefinite and trust-region methods can take advantage of directions of negative curvature. Computing $B_i$ explicitly is not practical, but products are reasonable as they only require solving two additional linear systems with the same matrix, thus motivating the use of a Newton-CG type trust-region solver. In particular, solvers such as TRON \citep{LinMore:1999b} and KNITRO \citep{ByrdNoceWalt:06} are suitable for minimizing $\phis$. KNITRO has the additional advantage of handling explicit linear constraints.

We have not yet addressed choosing the penalty parameter $\sigma$. Although we can provide an a posteriori threshold value for $\sigma^*$, it is difficult to know this threshold ahead of time. \cite{MukaPola:1975} give a scheme for updating $\rho$ with $\phisr$ and $\sigma = 0$; however, they were using a Newton-like scheme that required a solve with $B_1(x)$. Further, $\sigma^*$ ensures only {\em local} convexity, and that a local minimizer of~\eqref{eq:nlp} is a local minimizer of $\phis$---but as with other penalty functions, $\phis$ may be unbounded below in general for any $\sigma$. A heuristic that we employ is to ensure that the primal and dual feasibility, $\norm{c(x)}$ and $\norm{\gLag(x,\ys(x))}$, are within some factor of each other (e.g., 100) to encourage them to decrease at approximately the same rate. If primal feasibility decreases too quickly and small steps are taken, it is indicative of $\sigma$ being too large, and similarly if primal feasibility is too large then $\sigma$ should be increased; this can be done with a multiplicative factor or by estimating $\half \norm{P_A(x) \Hs(x) P_A(x)}$ via the power method (because it is an upper bound on $\sigma^*$ when $x = \xstar$; see \eqref{eq:14}). Although this heuristic is often effective in our experience, in situations where the penalty function begins moving toward negative infinity, we require a different recovery strategy, which is the subject of future work. The work of \cite{MukaPola:1975,ByrdLopeNoce:2012} gives promising directions for developing such strategies.

%\smarttodo{discuss regularization? the fact that we don't use it unless it is required}

% We propose using trust-region methods rather than linesearch methods \citep[\S 3--4]{NocedalW:2006}. Evaluating $\phis$ is relatively expensive compared to $f$ or $c$, as it requires solving a linear system; therefore, performing a linesearch would be expensive if $\phis$ must be evaluated repeatedly to determine a step-size. Trust-region methods typically evaluate $\phis$ fewer times than linesearch methods (assuming not too many steps are rejected), as they require only one evaluation per iteration. Another advantage of trust-region methods is their ability to take advantage of directions of negative curvature (and $\nabla^2 \phis$ is often indefinite).

% Although it is not practical to compute $B_i$ explicitly, we can compute matrix-vector products with $B_i$ by solving two systems of the form~\eqref{eq:aug-generic}, either iteratively or using the same matrix factorization to evaluate $\phis$ and $\nabla \phis$. This suggests that methods based on \cite{Stei:1983} Newton-CG should be used to solve the trust-region subproblem approximately. In particular, solvers such as TRON \citep{LinMore:1999} and KNITRO \citep{ByrdHribNoce:1999} are suitable for minimizing $\phis$; additionally, KNITRO has the advantage of handling explicit linear constraints; cf.~\cref{sec:explicit-constraints}.

In practice, regularization (\cref{sec:regularization}) is used only if necessary. For well-behaved problems, using $\delta = 0$ typically requires fewer outer iterations than using $\delta>0$. However, when convergence is slow and/or the Jacobians are ill-conditioned, initializing with $\delta > 0$ is often vital and can improve performance significantly.

\section{Numerical experiments}
\label{sec:numerical-results}

We investigate the performance of Fletcher's penalty function on several PDE-constrained optimization problems and some standard test problems. For each test we use the stopping criterion
\begin{equation}
    \label{eq:experiment-stop}
    \begin{matrix}
       \norm{c(x_k)} \le \epsilon_p
    \\ \norm{\gs(x_k)} \le \epsilon_d
    \end{matrix}
    \qquad \mbox{or} \qquad \norm{\nabla \phis(x_k)} \le \epsilon_d,
\end{equation}
with $\epsilon_p := \epsilon \left( 1 + \norm{x_k}_{\infty} + \norm{c(x_0)}_\infty \right)$ and $\epsilon_d := \epsilon \left( 1 + \norm{y_k}_{\infty} + \norm{\gs(x_0)}_\infty \right)$, where $\epsilon > 0$ is a tolerance, e.g., $\epsilon = 10^{-8}$. We also keep $\sigma$ fixed for each experiment.

Depending on the problem, the augmented systems \eqref{eq:aug-generic}
are solved by either direct or iterative methods. For direct methods,
we use the corrected semi-normal equations \citep{Bjorck:1996}; see
the supplementary materials for implementation details.  For iterative
solves, we use \CRAIG \citep{Ario:2013} with preconditioner $\Pscr$
and two possible termination criteria: for some positive parameter $\eta$,
\begingroup
\allowdisplaybreaks
\begin{subequations} \label{eq:system-solve-accuracy}
  \begin{align}
    \left\| \Kscr \bmat{p^{(k)} \\ q^{(k)}} - \bmat{u \\ v} \right\|_{\widebar\Pscr^{-1}} &\le \eta \left\| \bmat{u \\ v}\right\|_{\widebar\Pscr^{-1}}, \label{eq:terminate-residual}
    \qquad \bar\Pscr := \bmat{I & \\ & \Pscr} 
\\[6pt]  \left\|\bmat{p^* \\ q^*} - \bmat{p^{(k)} \\ q^{(k)}} \right\|_{\widebar\Pscr^{\phantom{-1}}} &\le \eta \left\| \bmat{p^{(k)} \\ q^{(k)}}\right\|_{\widebar\Pscr}, \label{eq:terminate-error}
  \end{align}
\end{subequations}
\endgroup
which are respectively based on the relative residual and the relative error (obtained via \LNLQ). We can use \eqref{eq:terminate-error} when a lower bound on $\sigma_{\min}(A\Pscr^{-1/2})$ is available (e.g., for the PDE-constrained optimization problems).

Because we are using trust-region methods to minimize $\phis$, the objective and gradient of $\phis$ (and therefore of $f$) are evaluated once per iteration. We use KNITRO \citep{ByrdNoceWalt:06} and a Matlab implementation of TRON \citep{LinMore:1999b}. Our implementation of TRON does not require explicit Hessians (only Hessian-vector products) and is unpreconditioned. We use $B_1(x)$ \eqref{eq:hess-approx-1} when efficient products with $S(u,x)$ are available, otherwise we use $B_2(x)$ \eqref{eq:hess-approx-2}. When $\phis$ is evaluated approximately (for large $\eta$), we use the solvers without modification, thus pretending that the function and gradient are evaluated exactly. Note that when the evaluations are noisy, the solvers are no longer guaranteed to converge; the development of suitable solvers is left to future work and discussed in \cref{sec:conclusion}.

\subsection{1D Burgers' problem}
\label{sec:burgers}

\begin{table}[t] \small
  \caption{Results of solving \eqref{eq:burgers} using TRON to minimize $\phis$, with various $\eta$ in \eqref{eq:terminate-error} (left) and \eqref{eq:terminate-residual} (right) to terminate the %augmented
  linear system solves. We record the number of Lagrangian Hessian (\#$Hv$), Jacobian (\#($Av$) and adjoint Jacobian (\#$A\T v$) products.}
  \label{tab:burgers}
  \centering
  \begin{tabular}{@{} c |  c  c  c  c | c  c c c @{}}
    \toprule
    $\eta$ & Iter. & \#$H v$ & \#$Av$ & \#$A\T v$ & Iter. & \#$H v$ & \#$Av$ & \#$A\T v$ \\
    \midrule
    $10^{-2\phantom{0}}$&37 & 19112 & 56797 & 50553    &35 & 7275 & 29453 & 27148\\
    $10^{-4\phantom{0}}$&34 & 6758  & 35559 & 33423    &35 & 7185 & 36757 & 34482\\
    $10^{-6\phantom{0}}$&35 & 7182  & 45893 & 43619    &35 & 7194 & 47999 & 45721\\
    $10^{-8\phantom{0}}$&35 & 7176  & 53296 & 51204    &35 & 7176 & 54025 & 51753\\
    $10^{-10}$          &35 & 7176  & 59802 & 57530    &35 & 7176 & 59310 & 57038\\
    \bottomrule
   \multicolumn{9}{c}{}
 \\[-5pt]\multicolumn{1}{c}{ }
  &\multicolumn{4}{c}{error-based termination}
  &\multicolumn{4}{c}{residual-based termination}
  \end{tabular}
\end{table}

Let $\Omega = (0,1)$ denote the physical domain and $H^1(\Omega)$ denote the Sobolev space of functions in $L^2(\Omega)$, whose weak derivatives are also in $L^2(\Omega)$. %Let $H_{[0,1]}^1(\Omega) = \{ u \in H^1(\Omega) \mid u(0) = 0, u(1) = -1 \}$ be the Hilbert space of functions that satisfy the given boundary conditions. 
We solve the following one-dimensional ODE-constrained control problem:
\begin{equation}
\label{eq:burgers}
\begin{aligned}
    \minimize{u \in H^1(\Omega), z \in L^2(\Omega)} &\enspace \half \int_\Omega \left( u(x) - u_d(x) \right)^2 dx + \tfrac{1}{2} \alpha \int_\Omega z(x)^2 dx
\\  \st&\enspace
\begin{array}[t]{rll}
       -\nu u_{xx} + u u_x = & \!\!\!\! z + h & \mbox{in } \Omega,
\\ u(0) = 0, \enspace u(1) = & \!\!\!\! -1,   &
\end{array}
\end{aligned}
\end{equation}
where the constraint is a 1D Burgers' equation over $\Omega = (0,1)$, with $h(x) =  2\left( \nu + x^3 \right)$ and $\nu = 0.08$. The first objective term measures deviation from the data $u_d(x) = -x^2$, while the second term regularizes the control with $\alpha = 10^{-2}$. We discretize~\eqref{eq:burgers} by segmenting $\Omega$ into $n_c=512$ equal-sized cells, and approximate $u$ and $z$ with piecewise linear elements. This results in a nonlinearly constrained optimization problem with $n=2n_c=1024$ variables and $m=n_c-1$ constraints.

We optimize $u,z$ by minimizing $\phis$ with $\sigma = 10^3$, using $B_1(x)$ \eqref{eq:hess-approx-1} as Hessian approximation  and $u_0 = 0$, $z_0 = 0$ as the initial point. We use TRON to optimize $\phis$ and \LNLQ to (approximately) solve~\eqref{eq:aug-generic}. We partition the Jacobian of the discretized constraints into $A(x)\T = \bmat{A_u(x)\T & A_z(x)\T}$, where $A_u(x) \in \R^{n \times n}$ and $A_z(x) \in \R^{m \times m}$ are the Jacobians for $u$ and $z$. We use the preconditioner $\Pscr(x) = A_u(x)\T A_u(x)$, which amounts to performing two linearized Burgers' solves with a
given source. For this preconditioner, $\sigma_{\min}(A\Pscr^{-1/2}) \geq 1$, allowing us to bound the error via \LNLQ and to use both~\eqref{eq:terminate-error} and~\eqref{eq:terminate-residual} to terminate \LNLQ. The maximum number of inner-CG iterations (for solving the trust-region subproblem) is $n$.

We choose $\epsilon = 10^{-8}$ in the stopping
conditions~\eqref{eq:experiment-stop}.  \cref{tab:burgers} records the number of Hessian- and Jacobian-vector products
as we vary the accuracy of the linear system solves via $\eta$ in \eqref{eq:system-solve-accuracy}.

TRON required a moderate number of trust-region iterations.  However,
evaluating $\phis$ and its derivatives can require many Jacobian and
Hessian products, because for every product with the approximate
Hessian we need to solve an augmented linear system.
(Note that there are more products with $A\T$ than $A$ because we shift the right-hand side as in \eqref{eq:shift-rhs} prior to each system solve.)
On the other
hand, the linear systems did not have to be solved to full
precision. As $\eta$ increased from $10^{-10}$ to $10^{-2}$, the
number of Hessian-vector products stayed relatively constant, but the
number of Jacobian-vector products dropped substantially, and the average
number of \LNLQ iterations required per solve dropped from about 9 to
5, except when $\eta = 10^{-2}$ in~\eqref{eq:terminate-error}, the
linear solves were too inaccurate and the number of CG iterations per
trust-region subproblem increased dramatically near the solution
(requiring more linear solves). Using \eqref{eq:terminate-residual}
tended to perform more products with the Lagrangian Hessian and
Jacobian, except when the linear solves were nearly exact, or
extremely inexact.

\subsection{2D Inverse Poisson problem}
\label{sec:inv-poisson}

% Table 2
\begin{table}[t] \small
  \caption{Results of solving \eqref{eq:inverse-poisson} using TRON to minimize $\phis$ with various $\eta$ in \eqref{eq:terminate-error} (left) and \eqref{eq:terminate-residual} (right) to terminate the %augmented
  linear system solves. We record the number of Lagrangian Hessian (\#$Hv$), Jacobian (\#($Av$) and adjoint Jacobian (\#$A\T v$) products.}
  \label{tab:inv-poisson}
  \centering
  \begin{tabular}{@{} c |  c  c  c  c | c c c c @{}}
    \toprule
    $\eta$ & Iter. & \#$H v$ & \#$Av$ & \#$A\T v$ & Iter. & \#$H v$ & \#$Av$ & \#$A\T v$ \\
    \midrule
    $10^{-2\phantom{0}}$&29 & 874 & 1794 & 2608 & 27 & 850 & 1772 & 2562 \\
    $10^{-4\phantom{0}}$&27 & 830 & 1950 & 2728 & 25 & 668 & 1649 & 2265 \\
    $10^{-6\phantom{0}}$&27 & 866 & 2317 & 3129 & 27 & 868 & 2356 & 3168 \\
    $10^{-8\phantom{0}}$&27 & 866 & 2673 & 3485 & 27 & 866 & 2784 & 3596 \\
    $10^{-10}$          &27 & 866 & 3145 & 3957 & 27 & 866 & 3251 & 4063 \\
    \bottomrule
   \multicolumn{9}{c}{}
 \\[-5pt]\multicolumn{1}{c}{ }
  &\multicolumn{4}{c}{error-based termination}
  &\multicolumn{4}{c}{residual-based termination}
  \end{tabular}
\end{table}

Let $\Omega = (-1,1)^2$ denote the physical domain and let $H_0^1(\Omega) \subset H^1(\Omega)$ be the Hilbert space of functions whose value on the boundary $\partial \Omega$ is zero. 
We solve the following 2D PDE-constrained control problem:
\begin{equation}
\label{eq:inverse-poisson}
\begin{aligned}
    \minimize{u \in H_0^1(\Omega), \, z \in L^\infty(\Omega)} &\enspace \half \int_\Omega \left( u - u_d \right)^2 dx + \tfrac{1}{2} \alpha \int_\Omega z^2 dx
\\ \st \quad & \enspace
   \begin{array}[t]{rl}
     - \nabla \cdot (z \nabla u) = h & \mbox{in } \Omega,
   \\                          u = 0 & \mbox{in } \partial \Omega.
   \end{array}
% \\  &\enspace - \nabla \cdot (z \nabla u) = h \qquad\mbox{in } \Omega,
% \\  &\enspace u = 0 \qquad\mbox{in } \partial \Omega.
\end{aligned}
\end{equation}
Let $c = (0.2,0.2)$ and define $S_1 = \{ x \mid \norm{x - c}_2 \le 0.3 \}$ and $S_2 = \{x \mid \norm{x - c}_1 \le 0.6 \}$.
The target state $u_d$ is generated as the solution of the PDE with
$z_*(x) = 1 + 0.5 \cdot I_{S_1}(x) + 0.5\cdot I_{S_2}(x)$, where for any set $C$,
$I_C(x) = 1$ if $x \in C$ and 0 otherwise. 

The force term here is $h(x_1, x_2) = - \sin(\omega x_1) \sin(\omega x_2)$, with $\omega = \pi - \tfrac{1}{8}$. The control variable $z$ represents the diffusion coefficients for the Poisson problem that we are trying to recover based on the observed state $u_d$. We set $\alpha=10^{-4}$ as regularization parameter. We discretize \eqref{eq:inverse-poisson} using $P_1$ finite elements on a uniform mesh of 1089 triangular elements and employ an identical discretization for the optimization variables $z \in L^\infty(\Omega)$, obtaining a problem with $n_z = 1089$ controls and $n_u = 961$ states, so that $n=n_u+n_z$. There are $m = n_u$ constraints, as we must solve the PDE on every interior grid point. The initial point is $u_0 = \ind$, $z_0 = \ind$.
Typically $z\ge0$ is explicitly imposed, but we only consider equality constraints in the present paper (inequality constraints are treated in \citep{EstrFrieOrbaSaun:2019b}). For this discretization, the iterates $z_k$ remained positive throughout the minimization.

We use $\sigma=10^{-2}$ as penalty parameter, and $B_2(x)$ as Hessian
approximation. We again use \LNLQ for the linear solves, with the
same preconditioning strategy as in \cref{sec:burgers}. The results
are given in \cref{tab:inv-poisson}. We see a similar trend to that of
\cref{tab:burgers}, as larger $\eta$ allows TRON to converge
within nearly the same number of outer iterations and Lagrangian
Hessian-vector products (even when $\eta=10^{-2}$), while significantly
decreasing the number of Jacobian-vector products. We see again that using
\eqref{eq:terminate-error} to terminate \LNLQ tends to need less work
than with \eqref{eq:terminate-residual}. The exception is
using~\eqref{eq:terminate-residual} with~$\eta=10^{-4}$.
The solver terminates two iterations sooner, resulting in a
sharp drop in Jacobian-vector products but little change in solution quality.
Note that if $\epsilon=10^{-9}$ were used for
the experiment, the runs would appear more similar to one another.

\subsection{2D Poisson-Boltzmann problem}
\label{sec:poisson-boltzmann}

We now solve a control problem where the constraint is a 2D Poisson-Boltzmann equation:
\begin{equation}
\label{eq:poisson-boltzmann}
\begin{aligned}
    \minimize{u \in H_0^1(\Omega), z \in L^2(\Omega)} \enspace & \half \int_\Omega \left( u - u_d \right)^2 dx + \tfrac{1}{2} \alpha \int_\Omega z^2 dx
\\  \st \quad &
    \begin{array}[t]{rll}
         - \Delta u + \sinh(u) = & \hspace{-.5em} h + z & \mbox{in } \Omega,
      \\ u = & \hspace{-.5em} 0 & \mbox{in } \partial \Omega.
    \end{array}
\end{aligned}
\end{equation}
We use the same notation and $\Omega$ as in \cref{sec:inv-poisson}, with forcing term $h(x_1, x_2) = - \sin(\omega x_1) \sin(\omega x_2)$, $\omega = \pi - \tfrac{1}{8}$, and target state 
\[
    u_d(x) = \begin{cases} 10 & \mbox{if } x\in [0.25,0.75]^2 \\
                           5  & \mbox{otherwise.}\end{cases}
\]
We discretize \eqref{eq:poisson-boltzmann} using $P_1$ finite elements on a uniform mesh with 10201 %1089
triangular elements, resulting in a problem with $n=20002$ %$n = 2050$ 
variables and $m=9801$ %$m = 961$ 
constraints. We use $u_0 = \ind$, $z_0 = \ind$ as the initial point.
% Table 3
% \begin{table}[t]\small
%   \caption{Results of solving \eqref{eq:poisson-boltzmann} using TRON to minimize $\phis$ with various $\eta$ in \eqref{eq:terminate-error} (left) and \eqref{eq:terminate-residual} (right) to terminate the linear system solves. We record the number of Lagrangian Hessian (\#$Hv$), Jacobian (\#($Av$) and adjoint Jacobian (\#$A\T v$) products.}
%   \label{tab:poisson-boltzmann}
%   \centering
%   \begin{tabular}{@{} c |  c  c  c  c | c c c c @{}}
%     \toprule
%     $\eta$ & Iter. & \#$H v$ & \#$Av$ & \#$A\T v$ & Iter. & \#$H v$ & \#$Av$ & \#$A\T v$ \\
%     \midrule
%     $10^{-2\phantom{0}}$&29 & 822 & 1582 & 2342 & 29 & 822 & 1636 & 2396 \\
%     $10^{-4\phantom{0}}$&29 & 816 & 1635 & 2389 & 29 & 816 & 1801 & 2555 \\
%     $10^{-6\phantom{0}}$&29 & 816 & 1800 & 2554 & 29 & 816 & 2029 & 2783 \\
%     $10^{-8\phantom{0}}$&29 & 816 & 2077 & 2831 & 29 & 816 & 2301 & 3055 \\
%     $10^{-10}$          &29 & 816 & 2351 & 3105 & 29 & 816 & 2637 & 3391 \\
%     \bottomrule
%   \multicolumn{9}{c}{}
%  \\[-5pt]\multicolumn{1}{c}{ }
%   &\multicolumn{4}{c}{error-based termination}
%   &\multicolumn{4}{c}{residual-based termination}
%   \end{tabular}
% \end{table}

\begin{table}[t]\small
  \caption{Results of solving \eqref{eq:poisson-boltzmann} using TRON to minimize $\phis$ with various $\eta$ in \eqref{eq:terminate-error} (left) and \eqref{eq:terminate-residual} (right) to terminate the linear system solves. We record the number of Lagrangian Hessian (\#$Hv$), Jacobian (\#($Av$) and adjoint Jacobian (\#$A\T v$) products.}
  \label{tab:poisson-boltzmann}
  \centering
  \begin{tabular}{@{} c |  c  c  c  c | c c c c @{}}
    \toprule
    $\eta$ & Iter. & \#$H v$ & \#$Av$ & \#$A\T v$ & Iter. & \#$H v$ & \#$Av$ & \#$A\T v$ \\
    \midrule
    $10^{-2\phantom{0}}$&57 & 1150 & 2300 & 3392 & 58 & 1166 & 2427 & 3534 \\
    $10^{-4\phantom{0}}$&58 & 1166 & 2479 & 3586 & 57 & 1150 & 2728 & 3820 \\
    $10^{-6\phantom{0}}$&57 & 1150 & 2812 & 3904 & 57 & 1150 & 3210 & 4302 \\
    $10^{-8\phantom{0}}$&57 & 1150 & 3357 & 4449 & 57 & 1150 & 3721 & 4813 \\
    $10^{-10}$          &57 & 1150 & 3811 & 4903 & 57 & 1150 & 4265 & 5357 \\
    \bottomrule
   \multicolumn{9}{c}{}
 \\[-5pt]\multicolumn{1}{c}{ }
  &\multicolumn{4}{c}{error-based termination}
  &\multicolumn{4}{c}{residual-based termination}
  \end{tabular}
\end{table}

We perform the experiment described in \cref{sec:inv-poisson} using $\sigma=10^{-1}$, and record the results in \cref{tab:poisson-boltzmann}. The results are similar to  \cref{tab:inv-poisson}, where the number of Jacobian products decrease with $\eta$, while the number of outer iterations and Lagrangian-Hessian products stay fairly constant. We see that with stopping criterion \eqref{eq:terminate-residual}, the \LNLQ iterations increase somewhat compared to \eqref{eq:terminate-error}, as it is a tighter criterion.

\subsection{Regularization}

We next solve problems where $A(x)$ is rank-deficient for some iterates, requiring that $\phis$ be regularized (\cref{sec:regularization}). We use the corrected semi-normal equations to solve the linear systems (to machine precision), with $B_2(x)$ as the Hessian approximation.

For problem \texttt{hs061} ($n=3$ variables, $m=3$ constraints) from the CUTEst test set \citep{GoulOrbaToin:2015} 
% \begin{align*}
%     \min_x \qquad& 4x_1^2 + 2x_2^2 + 2x_3^2 - 33x_1 + 16x_2 - 24x_3
% \\ \mbox{s.t.} \qquad& 3x_1 - 2x_2^2 - 7 = 0
% \\ & 4x_1 - x_3^2 - 11 = 0,
% \end{align*}
we use $x_0=0$, $\sigma = 10^2$, $\delta_0 = 10^{-1}$. For problem \texttt{mss1} ($n=90$, $m=73$) we use $x_0=0$, $\sigma=10^3$, $\delta_0=10^{-2}$. In both cases we decrease $\delta$ according to $\nu(\delta) = \delta^2$ to retain local quadratic convergence. For both problems, $A(x_0)$ is rank-deficient and $\phis$ is undefined, so the trust-region solvers terminate immediately. We therefore regularize $\phis$ and record the iteration at which $\delta$ changed. For \texttt{mss1}, we set $\delta_{\min} = 10^{-7}$ to avoid ill-conditioning.  The results are in \cref{tab:hs061}.

\begin{table}[t] \small
    \caption{Convergence results for \texttt{hs061} (left) and \texttt{mss1} (right) when TRON and KNITRO minimize $\phis(\cdot;\delta)$. The first rows show the iteration at which $\delta$ is updated, and the last two rows record the final primal and dual infeasibilities.}
    \label{tab:hs061}
    \centering
    \begin{tabular}{@{}cc@{}}
    \begin{tabular}{@{}c | c c@{}}
    \toprule
    $\delta$ & TRON & KNITRO \\
    \midrule
    $10^{-1}$   & 22 & 12 \\
    $10^{-2}$   & 23 & 13 \\
    $10^{-4}$   & 24 & 14 \\
    \midrule
    $\norm{c(\xbar)}$   & $10^{-10}$ & $10^{-9}$ \\
    $\norm{\gs(\xbar)}$ & $10^{-7}$  & $10^{-8}$ \\
    \bottomrule
    \end{tabular}
    &
    \begin{tabular}{@{}c | c c@{}}
    \toprule
    $\delta$ & TRON & KNITRO \\
    \midrule
    $10^{-2}$   & 46 & 33 \\
    $10^{-4}$   & 52 & 36 \\
    $10^{-7}$   & 53 & 37 \\
    \midrule
    $\norm{c(\xbar)}$   & $10^{-12}$ & $10^{-14}$ \\
    $\norm{\gs(\xbar)}$ & $10^{-8}$  & $10^{-9}$ \\
    \bottomrule
    \end{tabular}
    \end{tabular}
\end{table}

 The regularized problems converge with few iterations between $\delta$ updates, showing evidence of quadratic convergence. Note that a large $\delta$ can perturb $\phis(\cdot;\delta)$ substantially, so that $\delta_0$ may need to be chosen judiciously. We use $\delta_0=10^{-2}$ because neither TRON nor KNITRO would converge for \texttt{mss1} when $\delta_0=10^{-1}$.

\section{Discussion and concluding remarks}
\label{sec:conclusion}

The smooth exact penalty approach is promising for nonlinearly constrained optimization particularly when the augmented linear systems \eqref{eq:aug-generic} can be solved efficiently. However, several potential drawbacks remain as avenues for future work.

One property of $\phis$ observed from our experiments is that it is highly nonlinear and nonconvex. Even though superlinear or quadratic local convergence can be achieved, the high nonlinearity potentially results in many globalization iterations and small step-sizes. % than for other constrained methods. 
Further, $\phis$ is usually nonconvex, even if~\eqref{eq:nlp} is convex.

%An aspect not yet discussed is perhaps more problem-dependent. 
%Preconditioning the trust-region subproblems for this penalty function is particularly non-trivial because the (approximate) Hessian is available only as an operator.
Another aspect not yet discussed is preconditioning the trust-region subproblem.
This is particularly non-trivial because the (approximate) Hessian is available only as an operator.
Traditional approaches based on incomplete factorizations \citep{LinMore:1999} are not applicable. One possibility is to use a preconditioner for the Lagrangian Hessian $\Hs$ as a preconditioner for the approximate penalty Hessian $B_i$ \eqref{eq:hess-approx-1}--\eqref{eq:hess-approx-2}. This may be effective when $m \ll n$  because $\Hs$ and $B_i$ would differ only by a low-rank update; otherwise $\Hs$ can be a poor approximation to $B_i$. Preconditioning is vital for trust-region solvers, and is a direction for future work.

%Another direction we are investigating to improve the efficiency of $\phis$ is the use of quasi-Newton approximations to $\nabla^2 \phis$. 
Products with $B_i$ \eqref{eq:hess-approx-1}--\eqref{eq:hess-approx-2} are generally more expensive than typical Hessian-vector products, as they require solving a linear system. Products with a quasi-Newton approximation would be significantly faster. Also, exact curvature away from the solution may be less important than near the solution for choosing good directions; therefore a hybrid scheme that begins with quasi-Newton and switches to $B_i$ may be effective. Another strategy, similar to  \cite{MoraNoce:2000}, is to use quasi-Newton approximations to precondition the trust-region subproblems involving $B_i$: the approximation for iteration $k$ can be updated with every $B_i(x_{k-1})$ product, or with every update step $x_k - x_{k-1}$.

Further improvements that would make our approach applicable to a wider range of problems include: developing suitable solvers based on the work of \citet{HeinVice:2001} and \citet{KourHeinRidzBloe:2014} as these properly handle noisy function and gradient evaluations; a robust update for the penalty parameter $\sigma$; termination criteria on the augmented systems in order to integrate $\phis$ fully into a solver in the style of \cite{KourHeinRidzBloe:2014}; and relaxing \ref{assump:licq-min} to weaker constraint qualifications. Even if $\sigma \ge \sigma^*$ it is possible for $\phis$ to be unbounded, because $\sigma$ only guarantees local convexity. Diverging iterates must be detected sufficiently early because by the time unboundedness is detected, it may be too late to update $\sigma$ and we would need to backtrack several iterates. To relax \ref{assump:licq-min}, it may be possible to combine our regularization \eqref{eq:pen-reg} with a dual proximal-point approach.

The next obvious extension is to handle inequality constraints---the subject of \cite{EstrFrieOrbaSaun:2019b}. \cite{Fletcher:1973} proposed a non-smooth extension to $\phis$ for this purpose, but it is less readily applicable to most solvers.

Our Matlab implementation can be found at \url{https://github.com/optimizers/FletcherPenalty}. To highlight the flexibility of Fletcher's approach, we implemented several options for applying various solvers to the penalty function and for solving the augmented systems, and other options discussed along the way.

\appendix
\section{Technical details}
\label{sec:appendix}
We provide proofs and technical details that were omitted earlier.

\subsection{Example problem with a spurious local minimum}
\label{sec:spurious-min}

Consider the feasibility problem~\eqref{eq:nlp} with $f(x) = 0$ and $c(x) = x^3 + x - 2$. The only minimizer is $x^* = 1$. The penalty function
\[
    \phis(x) = \sigma \frac{(x^3 + x - 2)^2}{(3x^2+1)^2}
\]
is defined everywhere and has local minimizers at $x_1 = 1$ (the solutation) and $x_2 \approx -1.56$. Because the stationary points are independent of $\sigma$ in this case, $\phis$ always has the spurious local minimizer $x_2$.

\subsection{Proof of \cref{thm:regularized}}
\label{app:regularized-proof}

We repeat the assumptions of \cref{thm:regularized}:
\begin{enumerate}[label=(B\arabic*)]
    \item \eqref{eq:nlp} satisfies \ref{assump:c3} and \ref{assump:licq-min}. \label{ass:c3}
    \item $\xstar$ is a second-order KKT point for \eqref{eq:nlp} satisfying $\nabla^2 \phis(\xstar) \succ 0$. \label{ass:licq}
    \item There exist $\deltabar > 0$ and an open set $B(\xstar)$ containing $\xstar$ such that if $\widetilde{x}_0 \in B(\xstar)$ and $\delta \leq \deltabar$, the sequence $\widetilde{x}_{k+1} = \widetilde{x}_k + G(\phis(\cdot; \delta), \widetilde{x}_k)$ converges quadratically to $x(\delta)$ with constant $M$ independent of $\delta$. \label{ass:quad-conv}
    \item Assume $\delta, \delta_k \ge 0$ throughout to avoid indicating this everywhere.
\end{enumerate}
%\smarttodo{Is B3 for $\delta > 0$ or $\delta \geq 0$?}

\begin{lemma}
  \label{lem:consequences}
  Under the assumptions of \cref{thm:regularized}:
\begin{enumerate}
    \item \label{cons:1} $\phis(\cdot; \delta)$ has two continuous derivatives for $\delta > 0$ and $x \in \R^n$ by \ref{ass:c3}.
    \item \label{cons:2} There exists an open set $B_1(\xstar)$ containing $\xstar$ such that $\phis(x)$ is well-defined and has two continuous derivatives for all $x \in B_1(\xstar)$ by \ref{ass:c3}.
    \item \label{cons:3} $\nabla^2 \phis(\xstar) = \nabla^2 \phis(\xstar; 0) \succ 0$ and $\phis(x;\delta)$ is $\Cscr_2$ in both $x$ and $\delta$, so by assumption \ref{ass:licq} there exists an open set $B_2(\xstar)$ containing $\xstar$ and $\widetilde{\delta} > 0$ such that $\nabla^2 \phis(x;\delta) \succ 0$ for $(x,\delta) \in B_2(\xstar) \times [0,\widetilde{\delta}]$.
    \item \label{cons:4} By \cref{thm:reg-continuity}, there exists $\deltahat$ such that for $\delta \leq \deltahat$, $x(\delta)$ is continuous in $\delta$. Therefore there exists an open set $B_3(\xstar)$ such that $x(\delta) \in B_3(\xstar)$ for $\delta \leq \deltahat$.
    \item \label{cons:5} There exists a neighborhood $B_4(\xstar)$ where Newton's method is quadratically convergent (with factor $N$) on $\phis(x)$ by \ref{ass:licq}.
    \item \label{cons:6} Given $\delta_0 \leq \min\{\deltabar,\deltahat\}$, where $\deltabar$ is defined in~\ref{ass:quad-conv}, there exists a neighborhood $B_5(\xstar)$ such that $\norm{\nabla \phis(x; \delta_0)} \le \delta_0$ for all $x \in B_5(\xstar)$.
\end{enumerate}
%
%We begin with a few immediate consequences of the assumptions, followed by a few technical lemmas useful for the proof.
%
%Since $f$ and $c$ have three continuous derivatives, $\phis(\cdot, \delta)$ has two continuous derivatives for $\delta > 0$ and $x \in \R^n$. Since \eqref{eq:nlp} satisfies LICQ, there exists an open set $B_1(\xstar)$ containing $\xstar$, and $\phis(x)$ is defined and has two continuous derivatives for all $x \in B_1(\xstar)$. Furthermore, $\nabla^2 \phis(\xstar) = \nabla^2 \phis(\xstar; 0) \succ 0$ and $\phis(x;\delta)$ is second order smooth in both $x$ and $\delta$, so there exists an open set $B_2(\xstar)$ containing $\xstar$ and a $\widetilde{\delta} > 0$ such that $\nabla^2 \phis(x;\delta) \succ 0$ for $(x,\delta) \in B_2(\xstar) \times [0,\widetilde{\delta}]$.
%
%By \cref{thm:reg-continuity}, there exists $\deltahat$ such that for $\delta \leq \deltahat$, $x(\delta)$ is continuous in $\delta$. Therefore there exists an open set $B_3(\xstar)$ such that $x(\delta) \in B_3(\xstar)$ for $\delta \leq \deltahat$. Also, since $\phis(\xstar) \succ 0$ there exists a neighbourhood $B_4(\xstar)$ where Newton's method is quadratically convergent on $\phis(x)$, so that if $x_k^N$ is a sequence of Newton iterations with $x_0^N \in B_4(\xstar)$, then $\|x_{k+1}^N - \xstar\| \leq N \|x_k^N - \xstar\|^2$ for some constant $N$.
%
We define
\vspace*{-8pt}
\[\Bprime (\xstar) := B(\xstar) \cap \left( \bigcap_{i=1}^5 B_i(\xstar) \right)
  \qquad \mbox{and} \qquad
  \deltaprime < \min \{\deltabar, \widetilde{\delta}, \deltahat, 1\},
\vspace*{-6pt}
\]
and note that $x_k$ defined by \cref{alg:regularized} satisfies $x_k \in \Bprime (\xstar)$ for all $k$ by \ref{ass:quad-conv}.
Because $\phis(x;\delta)$ is $\Cscr_2$ in $\Bprime(\xstar) \times [0,\deltaprime]$, there exist positive constants \(K_1, \dots, K_5\) such that
\begin{enumerate}
    \setcounter{enumi}{6}
    \item \label{cons:7} $\norm{\nabla \phis(x;\delta)} \le K_1 < 1$,
                         $\norm{\nabla^2 \phis(x;\delta)}, \norm{\nabla^2 \phis(x;\delta)^{-1}} \le K_2$ for $x \in \Bprime(\xstar)$ and $\delta \leq \deltaprime$;
    \item \label{cons:8} $\norm{\nabla P_{\delta}(x)} \le K_3$, $\norm{\nabla^2 P_{\delta}(x)} \le K_4$ for $x \in \Bprime(\xstar)$ and $\delta \leq \deltaprime$;
    \item \label{cons:9} $\norm{x_k - \xstar} \le K_5 \norm{\nabla \phis(x_k)}$.
\end{enumerate}
\end{lemma}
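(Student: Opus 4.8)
My plan is to treat \cref{lem:consequences} as a sequence of regularity, continuity, and compactness statements, each following from the standing hypotheses \ref{ass:c3} and \ref{ass:licq}, the perturbation identity \eqref{eq:perturb-phi}, and \cref{thm:reg-continuity}. I would group the nine items into pointwise smoothness (items \ref{cons:1}--\ref{cons:2}), propagation to neighborhoods (items \ref{cons:3}--\ref{cons:6}), and extraction of uniform constants on the compact box $\overline{\Bprime(\xstar)}\times[0,\deltaprime]$ (items \ref{cons:7}--\ref{cons:9}).

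For the pointwise claims: item \ref{cons:1} follows because for $\delta>0$ the matrix $A(x)^T A(x)+\delta^2 I$ is positive definite for every $x$, so the regularized shifted least-squares problem \eqref{eq:pen-reg} has a unique solution $\ys(\cdot;\delta)$ that is as smooth as the data; by \ref{ass:c3} this gives $\phis(\cdot;\delta)\in\Cscr_2$ on all of $\R^n$. Item \ref{cons:2} uses that \ref{assump:licq-min} makes $A(\xstar)$ of full column rank, and full rank is an open condition, so $\ys$ and hence $\phis$ are defined and twice continuously differentiable on some $B_1(\xstar)$.

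For the neighborhood claims I would lean on \eqref{eq:perturb-phi}, $\phis(x;\delta)=\phis(x)+\delta^2 P_\delta(x)$: since $P_\delta$ enters only through $\delta^2$ and is $\Cscr_2$ in $x$ near $\xstar$, the map $(x,\delta)\mapsto\phis(x;\delta)$ is jointly $\Cscr_2$ \emph{including at} $\delta=0$, so continuity of the smallest eigenvalue propagates $\nabla^2\phis(\xstar;0)\succ0$ (hypothesis \ref{ass:licq}) to a box $B_2(\xstar)\times[0,\widetilde\delta]$, giving item \ref{cons:3}. Item \ref{cons:4} is immediate from \cref{thm:reg-continuity} (continuity of $x(\delta)$); item \ref{cons:5} is the textbook local quadratic convergence of Newton's method for a $\Cscr_2$ objective with nonsingular Hessian at its root, valid by \ref{ass:licq}; and item \ref{cons:6} holds because $\nabla\phis(\cdot;\delta_0)$ is continuous and vanishes at $x(\delta_0)$, which lies near $\xstar$ once $\delta_0$ is small, so $\norm{\nabla\phis(x;\delta_0)}\le\delta_0$ on a suitable $B_5(\xstar)$. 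With $\Bprime$ and $\deltaprime$ then fixed, items \ref{cons:7}--\ref{cons:8} are finite suprema of the continuous maps $\nabla\phis$, $\nabla^2\phis$, $(\nabla^2\phis)^{-1}$, $\nabla P_\delta$, $\nabla^2 P_\delta$ over the compact box, the inverse being bounded because the positive-definiteness box from item \ref{cons:3} keeps the Hessian nonsingular there. Item \ref{cons:9} is the standard local error bound: from $\nabla\phis(\xstar)=0$ and the mean-value form
\[
  \nabla\phis(x) = \left(\int_0^1 \nabla^2\phis(\xstar + t(x-\xstar))\,dt\right)(x-\xstar),
\]
invertibility of the averaged Hessian near $\xstar$ yields $\norm{x-\xstar}\le K_5\norm{\nabla\phis(x)}$.

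The only steps that need genuine care, and which I expect to be the main obstacle, are the joint $\Cscr_2$ regularity through $\delta=0$ in item \ref{cons:3} --- not visible from the definition of $\ys(\cdot;\delta)$, but transparent once \eqref{eq:perturb-phi} exhibits the regularization as a $\delta^2$-perturbation --- and the \emph{uniformity} of $K_1,\dots,K_5$ over $\delta\in[0,\deltaprime]$. The latter forces me to first shrink $\widetilde\delta$, $\deltahat$, and the neighborhoods so that positive definiteness of $\nabla^2\phis(x;\delta)$ and full rank of $A(x)$ persist on the entire compact box $\overline{\Bprime(\xstar)}\times[0,\deltaprime]$ \emph{before} taking suprema; once that is arranged, every remaining assertion reduces to continuity together with compactness.
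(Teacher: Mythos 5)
Your proposal is correct and follows essentially the same route as the paper's own (very terse) proof: items \ref{cons:1}--\ref{cons:5} are cited to \ref{ass:c3}, \ref{ass:licq}, and \cref{thm:reg-continuity}; item \ref{cons:6} is argued exactly as you do, from $\nabla\phis(x(\delta_0);\delta_0)=0$ and continuity, intersecting a neighbourhood of $x(\delta_0)$ with $B_3(\xstar)$; and items \ref{cons:7}--\ref{cons:9} are taken as consequences of joint $\Cscr_2$ regularity on $\Bprime(\xstar)\times[0,\deltaprime]$. Your additional care --- justifying joint smoothness through \eqref{eq:perturb-phi}, passing to the closure $\overline{\Bprime(\xstar)}\times[0,\deltaprime]$ before taking suprema, and the mean-value argument for item \ref{cons:9} --- simply fills in details the paper leaves implicit.
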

%    \smarttodo{We need a brief proof with one short sentence per item.} 
%    \smarttodo{Item 9: for what values of $k$?}

\begin{proof}
  Statements~\ref{cons:1}--\ref{cons:2} follow from~\ref{ass:c3}.
  Statements~\ref{cons:3} and~\ref{cons:5} follow from~\ref{ass:licq}.
  Statement~\ref{cons:4} follows from \cref{thm:reg-continuity}.
    % We prove result 6; the rest follow immediately from assumptions \ref{assump:c3}, \ref{assump:licq-min} and \ref{ass:c3}--\ref{ass:quad-conv}.

  Now consider Statement~\ref{cons:6}.
  For a given $\delta_0$, we have $\nabla \phis(x(\delta_0); \delta_0) = 0$ and so there exists a neighbourhood $\widetilde{B}$ around $x(\delta_0)$ such that $\norm{\nabla \phis(x; \delta_0)} \le \delta_0$ for all $x \in \widetilde{B}$. Further, $x(\delta_0) \in B_3(\xstar)$, so let $B_5(\xstar) = \widetilde{B} \cap B_3(\xstar)$.
\end{proof}

We first give some technical results. % to be used in the proof.
All assume that $x_k \in \Bprime(\xstar)$ and $\delta_k \leq \deltaprime$.

\begin{lemma}
\label{lem:grad}
Assume $\delta_{k-1} \leq \delta_0 \leq \deltaprime$.
For $\delta_k$ defined according to~\eqref{eq:delta-update},
$$ \norm{\nabla \phis(x_k; \delta_k)} = O(\delta_k). $$
\end{lemma}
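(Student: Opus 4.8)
The plan is to prove the bound by induction on $k$, with the perturbation identity \eqref{eq:perturb-phi} as the main tool. Differentiating \eqref{eq:perturb-phi} in $x$ gives $\nabla\phis(x;\delta)=\nabla\phis(x)+\delta^2\nabla P_\delta(x)$, so for the two consecutive parameters at the common point $x_k$,
\[
\nabla\phis(x_k;\delta_k)-\nabla\phis(x_k;\delta_{k-1})=\delta_k^2\nabla P_{\delta_k}(x_k)-\delta_{k-1}^2\nabla P_{\delta_{k-1}}(x_k).
\]
Using the uniform bound $\norm{\nabla P_\delta}\le K_3$ (Statement~\ref{cons:8}) together with the elementary consequences of \eqref{eq:delta-update}, namely $\nu(\delta_{k-1})=\delta_{k-1}^2\le\delta_k\le\delta_{k-1}\le\delta_0<1$ and hence $\delta_k^2,\delta_{k-1}^2\le\delta_k$, this difference is $O(\delta_k)$. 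It therefore suffices to show $\norm{\nabla\phis(x_k;\delta_{k-1})}=O(\delta_k)$, and I would do this by splitting on the inner minimum in \eqref{eq:delta-update}.

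If the minimum is attained by the gradient, that is $\norm{\nabla\phis(x_k;\delta_{k-1})}\le\delta_{k-1}$, then by construction $\delta_k\ge\norm{\nabla\phis(x_k;\delta_{k-1})}$, and combining with the previous estimate gives $\norm{\nabla\phis(x_k;\delta_k)}\le(1+2K_3)\delta_k$ at once. This branch also settles the base case $k=1$, since Statement~\ref{cons:6} guarantees $\norm{\nabla\phis(x_1;\delta_0)}\le\delta_0$ and so places the first iteration here.

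The delicate case, which I expect to be the main obstacle, is when the minimum is $\delta_{k-1}$: then $\delta$ stagnates ($\delta_k=\delta_{k-1}$) and $\norm{\nabla\phis(x_k;\delta_{k-1})}>\delta_{k-1}$, so no bound can be read off the update rule directly. Here I would exploit the single inner step that produced $x_k$, namely $x_k=x_{k-1}+G(\phis(\cdot;\delta_{k-1}),x_{k-1})$. Since $x_{k-1}\in\Bprime(\xstar)$ and $\delta_{k-1}\le\deltabar$, assumption~\ref{ass:quad-conv} gives $\norm{x_k-x(\delta_{k-1})}\le M\norm{x_{k-1}-x(\delta_{k-1})}^2$ with $M$ uniform in $\delta$. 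Because $\nabla\phis(x(\delta_{k-1});\delta_{k-1})=0$ and both $\nabla^2\phis$ and its inverse are bounded by $K_2$ on $\Bprime(\xstar)\times[0,\deltaprime]$ (Statement~\ref{cons:7}), I can pass between gradient norms and distances, namely $\norm{x_{k-1}-x(\delta_{k-1})}\le K_2\norm{\nabla\phis(x_{k-1};\delta_{k-1})}$ and $\norm{\nabla\phis(x_k;\delta_{k-1})}\le K_2\norm{x_k-x(\delta_{k-1})}$. Feeding in the inductive hypothesis $\norm{\nabla\phis(x_{k-1};\delta_{k-1})}\le C\delta_{k-1}$ then yields $\norm{\nabla\phis(x_k;\delta_k)}=\norm{\nabla\phis(x_k;\delta_{k-1})}\le K C^2\delta_{k-1}^2$ for a constant $K$ built from $K_2$ and $M$, and since $\delta_{k-1}^2=\delta_k^2\le\delta_k$ this is again $O(\delta_k)$.

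It remains to keep the constant uniform across the induction, which is where the uniform bound $M_\delta\le M$ is indispensable. Taking $C:=1+2K_3$ from the easy branch and writing the stagnation estimate as $KC^2\delta_{k-1}^2=(KC\delta_{k-1})\,C\delta_k$ (using $\delta_{k-1}=\delta_k$), I would shrink $\deltaprime$ so that $KC\deltaprime\le1$; then both branches give $\norm{\nabla\phis(x_k;\delta_k)}\le C\delta_k$, closing the induction. The only genuinely nontrivial point is the stagnation case, as it is the sole place where the conclusion does not follow from the update rule and instead rests on the local quadratic behaviour of the inner solver and on $\norm{x(\delta)-\xstar}=O(\delta)$ from \cref{thm:reg-continuity}.
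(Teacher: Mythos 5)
Your proof is correct, and it reorganizes the crucial case differently from the paper. Both arguments share the same skeleton: the perturbation identity \eqref{eq:perturb-phi} bounds the change from $\delta_{k-1}$ to $\delta_k$ at the fixed point $x_k$ by $(\delta_{k-1}^2+\delta_k^2)K_3 = O(\delta_k)$, and the easy branch $\norm{\nabla\phis(x_k;\delta_{k-1})}\le\delta_{k-1}$ (which also covers $k=1$ via statement~\ref{cons:6}) follows directly from the update rule \eqref{eq:delta-update}. The difference is in the stagnation branch $\delta_k=\delta_{k-1}$. The paper traces back to the iteration $\ell$ at which $\delta$ last decreased, where the easy branch gives $\norm{\nabla\phis(x_\ell;\delta_\ell)}=O(\delta_\ell)$, argues via quadratic convergence of the inner iteration (and $K_1<1$) that the stagnation run has length $k-\ell=O(1)$, and then propagates the bound across those $O(1)$ quadratically convergent steps. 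You instead run a direct induction carrying an explicit uniform constant $C=1+2K_3$: a single application of \ref{ass:quad-conv} to the step $x_k=x_{k-1}+G(\phis(\cdot;\delta_{k-1}),x_{k-1})$, converted to gradient norms through the bounds $\norm{\nabla^2\phis(\cdot;\delta)},\,\norm{\nabla^2\phis(\cdot;\delta)^{-1}}\le K_2$ of statement~\ref{cons:7} (this conversion is exactly what the paper compresses into ``Taylor's theorem, for some $\Mtilde$''; your $K=K_2^3M$ is the paper's $\Mtilde$), turns the inductive bound $C\delta_{k-1}$ into $KC^2\delta_{k-1}^2=KC^2\delta_k^2$, and shrinking $\deltaprime$ so that $KC\deltaprime\le1$ closes the induction with the same $C$. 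Your version is tighter on a point the paper leaves informal: since the constant is fixed once and for all, you need no run-length argument and there is no possibility of constants accumulating across successive stagnation runs---bookkeeping the paper handles implicitly in its final appeal to ``the induction assumption.'' What the paper's longer route buys is the structural byproduct that each stagnation run ends after a bounded number of steps with the gradient driven down to $O(\delta_k^2)$, which foreshadows the R-quadratic rate asserted in \cref{thm:regularized}; for the lemma as stated, your argument suffices and is cleaner.
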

%\smarttodo{... for all k?}

\begin{proof}
The result holds for $k=0$ in view of observation~\ref{cons:6} of \cref{lem:consequences}.

Because $x_k \in \Bprime(\xstar)$ and $\delta_k$, $\delta_{k-1} \leq \deltaprime$, observation~\ref{cons:8} of \cref{lem:consequences} gives
\(\norm{\nabla P_{\delta_{k-1}} (x_k)} \le K_3\) and
\(\norm{\nabla P_{\delta_k}     (x_k)} \le K_3\).
%\eqref{eq:perturb-phi} and observation~\ref{cons:6} of \cref{lem:consequences} imply that \(\|\nabla P_{\delta_{k-1}} (x_k)\| \leq \delta_{k-1}\) and \(\|\nabla P_{\delta_k}(x_k)\| \leq \delta_k\).
Using~\eqref{eq:perturb-phi}, we have
\begin{align}
   \norm{\nabla \phis(x_k; \delta_k)} &= \norm{\nabla \phis(x_k;\delta_{k-1}) - \delta_{k-1}^2 \nabla P_{\delta_{k-1}} (x_k) + \delta_k^2 \nabla P_{\delta_k}(x_k)} \nonumber
\\ &\le \norm{\nabla \phis(x_k;\delta_{k-1})} + \delta_{k-1}^2 \norm{\nabla P_{\delta_{k-1}} (x_k)} +
     \delta_k^2 \norm{\nabla P_{\delta_k}(x_k)} \nonumber
\\ &\le \norm{\nabla \phis(x_k;\delta_{k-1})} + (\delta_{k-1}^2 + \delta_k^2) K_3 \nonumber
\\ &=   \norm{\nabla \phis(x_k;\delta_{k-1})} + O(\delta_k), \label{eq:gradphi-intermediate}
%\\  &\leq \delta_k + (\delta_k^2 + \delta_{k-1}^2) K_1 \in O(\delta_k).
\end{align}
where the last inequality follows from $\delta_k^2 \leq \delta_k$ and~\eqref{eq:delta-update}, which implies that $\delta_k \geq \nu (\delta_{k-1}) = \delta_{k-1}^2$.

We consider two cases.
If $\norm{\nabla \phis(x_k; \delta_{k-1})} \le \delta_{k-1}$,~\eqref{eq:delta-update} implies that
\[
    \delta_k = \max(\norm{\nabla \phis(x_k; \delta_{k-1})}, \, \delta_{k-1}^2)
             \ge \norm{\nabla \phis(x_k; \delta_{k-1})},
\]
and therefore \eqref{eq:gradphi-intermediate} gives
\(
    \norm{\nabla \phis(x_k; \delta_k)} = O(\delta_k).
\)

Otherwise, \eqref{eq:delta-update} yields $\delta_k = \max(\delta_{k-1}, \, \delta_{k-1}^2) = \delta_{k-1}$, and there exists $\ell \le k-1$ such that $\delta_{k} = \delta_{k-1} = \dots = \delta_{\ell} < \delta_{\ell-1}$, or $\ell = 1$. If $\delta_{\ell} < \delta_{\ell-1}$, step 3 of \cref{alg:regularized} implies that $\norm{\nabla \phis(x_{\ell};\delta_{\ell-1})} < \delta_{\ell-1}$, which by the above sequence of inequalities implies that $\norm{\nabla \phis(x_{\ell};\delta_\ell)} = O(\delta_{\ell})$. Then, because $\delta_k = \delta_\ell$,
\[
  \norm{\nabla \phis(x_{\ell}, \delta_{k})} = \norm{\nabla \phis(x_{\ell}, \delta_{\ell})} = O(\delta_{\ell}) = O(\delta_k).
\]
Define the sequence $\{\widetilde{x}_j\}$ with $\widetilde{x}_0 = x_\ell$ and $\widetilde{x}_{j+1} = \widetilde{x}_j + G(\phis(\cdot; \delta_\ell), \widetilde{x}_j)$. By \ref{ass:quad-conv}, $\widetilde{x}_j \rightarrow x(\delta_{\ell})$ quadratically, so after $j$ iterations of this procedure, by Taylor's theorem, for some $\Mtilde$, we have
\begin{align*}
    \norm{\nabla \phis(\widetilde{x}_j;\delta_\ell)}       \le \Mtilde^j \norm{\nabla\phis(\widetilde{x}_0;\delta_\ell)}^{2^j} &\le \Mtilde^j K_1^{2^{j-1}}
    \norm{\nabla\phis(\widetilde{x}_0;\delta_\ell)}^2. %= \|\nabla \phis(x_{\ell};\delta_k)\|^2 = O(\delta_k^2) < \delta_k.
\end{align*}
Then after $j = O(1)$ iterations of this procedure (depending only on $\Mtilde$ and $K_1$), we have $\Mtilde^j K_1^{2^{j-1}} \le 1$ (because $K_1 < 1$), so that
\begin{align*}
    \norm{\nabla \phis(\widetilde{x}_j;\delta_k)} \le \norm{\nabla \phis(\widetilde{x}_0;\delta_k)}^2 = \norm{\nabla \phis(x_{\ell};\delta_k)}^2 = O(\delta_k^2) < O(\delta_k).
\end{align*}
Therefore, $k - \ell \leq j = O(1)$, and by \ref{ass:quad-conv} and the fact that $\delta_{k-1}=\delta_k$,
\[
    \norm{\nabla \phis(x_k; \delta_{k-1})}
    = O\left( M^{k - \ell} \norm{\nabla \phis(x_\ell; \delta_{k-1})}^{2^{k-\ell}} \right)
    = O\left( M^{k - \ell} \delta_{k}^{2^{k-\ell}} \right)
    = O(\delta_k).
\]
The second equality follows from the fact that $\delta_{k-1} = \delta_k$, and so $\norm{\nabla \phis(x_\ell; \delta_{k-1})} = O(\delta_k)$ by the induction assumption.
\end{proof}
%\smarttodo{I'm finding this proof far too terse to follow}

\begin{lemma}
\label{lem:step-size}
   For $p_k$ defined by step 4 of \cref{alg:regularized}, $\norm{p_k} = O(\delta_k)$.
\end{lemma}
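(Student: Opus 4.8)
The plan is to recognize the update direction $p_k = G(\phis(\cdot;\delta_k), x_k)$ as an inexact-Newton step and then bound its three ingredients separately. First I would invoke \cref{lem:inexact-newton} with the choice $\varphi = \phis(\cdot;\delta_k)$ and order $\tau = 2$: assumption~\ref{ass:quad-conv} guarantees that the iteration $\widetilde{x}_{j+1} = \widetilde{x}_j + G(\phis(\cdot;\delta_k), \widetilde{x}_j)$ converges quadratically to $x(\delta_k)$, which is precisely the hypothesis that \cref{lem:inexact-newton} requires. The lemma then delivers the representation~\eqref{eq:newton}, namely
\[
  \nabla^2 \phis(x_k; \delta_k)\, p_k = -\nabla \phis(x_k;\delta_k) + r_k,
  \qquad \norm{r_k} = O\!\left(\norm{\nabla \phis(x_k;\delta_k)}^2\right),
\]
so that $p_k = -\nabla^2 \phis(x_k;\delta_k)^{-1}\bigl(\nabla \phis(x_k;\delta_k) - r_k\bigr)$.

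Next I would take norms and estimate each factor using results already established. Observation~\ref{cons:7} of \cref{lem:consequences} gives $\norm{\nabla^2\phis(x_k;\delta_k)^{-1}} \le K_2$ uniformly for $x_k \in \Bprime(\xstar)$ and $\delta_k \le \deltaprime$. \cref{lem:grad} supplies $\norm{\nabla \phis(x_k;\delta_k)} = O(\delta_k)$, and feeding this into the residual bound above yields $\norm{r_k} = O(\delta_k^2)$; since $\delta_k \le \deltaprime < 1$ we have $\delta_k^2 \le \delta_k$, so $\norm{r_k} = O(\delta_k)$ as well. Combining,
\[
  \norm{p_k} \le \norm{\nabla^2 \phis(x_k;\delta_k)^{-1}}
     \bigl(\norm{\nabla \phis(x_k;\delta_k)} + \norm{r_k}\bigr)
     \le K_2\bigl(O(\delta_k) + O(\delta_k^2)\bigr) = O(\delta_k),
\]
which is the claim.

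The main obstacle I anticipate is the uniformity of the implied constants in $\delta_k$. \cref{lem:inexact-newton} is stated for a single fixed function, and its residual constant is assembled from the Newton convergence factor, the $G$-convergence factor, and bounds on $\nabla^2\varphi$ together with the local error-to-gradient ratio; applied naively to the family $\{\phis(\cdot;\delta)\}_{\delta}$ these could in principle deteriorate as $\delta \to 0$ and spoil the $O(\delta_k)$ conclusion. I would argue this cannot happen: assumption~\ref{ass:quad-conv} furnishes a single quadratic-convergence constant $M$ valid for all $\delta \le \deltabar$, while observations~\ref{cons:3}, \ref{cons:5}, and~\ref{cons:7} of \cref{lem:consequences} provide uniform positive-definiteness and uniform bounds on $\nabla^2\phis(x;\delta)$ and its inverse over the compact neighbourhood $\Bprime(\xstar) \times [0,\deltaprime]$. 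These $\delta$-independent quantities let me run the argument of \cref{lem:inexact-newton} once with constants that do not depend on $k$, so that the residual estimate $\norm{r_k} = O(\norm{\nabla\phis(x_k;\delta_k)}^2)$ holds with a fixed constant and the resulting bound on $\norm{p_k}$ is genuinely $O(\delta_k)$.
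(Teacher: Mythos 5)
Your proof is correct and follows essentially the same route as the paper's: invoke \cref{lem:inexact-newton} with $\tau=2$ via assumption~\ref{ass:quad-conv} to express $p_k$ as an inexact-Newton step, then combine the bound $\norm{\nabla^2\phis(x_k;\delta_k)^{-1}} \le K_2$ from observation~\ref{cons:7} of \cref{lem:consequences} with $\norm{\nabla\phis(x_k;\delta_k)} = O(\delta_k)$ from \cref{lem:grad}. Your closing paragraph on the $\delta$-uniformity of the implied constants makes explicit a point the paper leaves implicit (it simply writes a fixed constant $N_2$), and is a welcome clarification rather than a deviation.
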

\begin{proof}
According to \ref{ass:quad-conv}, we may apply \cref{lem:inexact-newton} with $\tau = 2$ and view step 4 of \cref{alg:regularized} as an inexact-Newton step, i.e., there exists a constant $N_2 > 0$ such that
$$ \nabla^2 \phis(x_k;\delta_k) p_k = -\nabla \phis(x_k;\delta_k) + r_k,
   \qquad \norm{r_k} \le N_2 \norm{\nabla \phis(x_k; \delta_k)}^2.
$$
Therefore by \cref{lem:grad},
\begin{align*}
    \norm{p_k} &= \norm{\nabla^2 \phis(x_k;\delta_k)^{-1} (-\nabla \phis(x_k;\delta_k) + r_k)}
\\  &\le \norm{\nabla^2 \phis(x_k;\delta_k)^{-1}} \left( \norm{\nabla \phis(x_k;\delta_k)} + \norm{r_k}\right)
\\  &\le K_2 \left( \norm{\nabla \phis(x_k;\delta_k)} + N_2 \norm{\nabla \phis(x_k; \delta_k)}^2\right)
\\  &\le K_2 \left( O(\delta_k) + O(\delta_k^2) \right) = O(\delta_k).
\end{align*}
\end{proof}
%\smarttodo{$\in O(\delta)$ vs. $= O(\delta)$}

\begin{lemma}
\label{lem:newton-dir}
Let $p_k$ be defined by step 4 of \cref{alg:regularized} and $q_k$ be the Newton direction for the unregularized penalty function defined by $\nabla^2 \phis(x_k) q_k = -\nabla \phis(x_k)$. Then $\norm{p_k - q_k} \in O(\delta_k^2)$.
\end{lemma}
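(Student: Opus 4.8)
The plan is to write both search directions in closed form, subtract them, and bound the difference by the size of the regularization. First I would record the two defining relations. By assumption~\ref{ass:quad-conv} together with \cref{lem:inexact-newton} applied with $\tau = 2$, step~4 of \cref{alg:regularized} is an inexact-Newton step for $\phis(\cdot;\delta_k)$, so there is a constant $N_2 > 0$ with
\[
  \nabla^2\phis(x_k;\delta_k)\, p_k = -\nabla\phis(x_k;\delta_k) + r_k,
  \qquad \norm{r_k} \le N_2 \norm{\nabla\phis(x_k;\delta_k)}^2,
\]
whereas $q_k$ solves $\nabla^2\phis(x_k)\, q_k = -\nabla\phis(x_k)$. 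Abbreviating $H := \nabla^2\phis(x_k)$ and $H_\delta := \nabla^2\phis(x_k;\delta_k)$, we have $p_k = H_\delta^{-1}\bigl(-\nabla\phis(x_k;\delta_k) + r_k\bigr)$ and $q_k = -H^{-1}\nabla\phis(x_k)$, hence
\[
  p_k - q_k = \bigl(H^{-1}\nabla\phis(x_k) - H_\delta^{-1}\nabla\phis(x_k;\delta_k)\bigr) + H_\delta^{-1} r_k .
\]

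The second step is to exhibit the regularized data as an $O(\delta_k^2)$ perturbation of the unregularized data. Differentiating the identity~\eqref{eq:perturb-phi} once and twice gives $\nabla\phis(x_k;\delta_k) = \nabla\phis(x_k) + \delta_k^2 \nabla P_{\delta_k}(x_k)$ and $H_\delta = H + \delta_k^2 \nabla^2 P_{\delta_k}(x_k)$. I would then invoke the resolvent identity $H_\delta^{-1} - H^{-1} = -\delta_k^2\, H^{-1}\nabla^2 P_{\delta_k}(x_k)\, H_\delta^{-1}$, substitute both expansions into the first bracketed term, and collect: every surviving contribution carries at least one factor $\delta_k^2$ multiplied by a product of the quantities $\norm{H^{-1}}$, $\norm{H_\delta^{-1}}$, $\norm{\nabla P_{\delta_k}(x_k)}$, $\norm{\nabla^2 P_{\delta_k}(x_k)}$, and $\norm{\nabla\phis(x_k)}$. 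All of these are bounded uniformly on $\Bprime(\xstar)\times[0,\deltaprime]$ by observations~\ref{cons:7}--\ref{cons:8} of \cref{lem:consequences} (through the constants $K_1,K_2,K_3,K_4$, taking $\delta = 0$ where the unregularized quantities appear), so the first bracket is $O(\delta_k^2)$.

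The third step controls the residual term. By \cref{lem:grad}, $\norm{\nabla\phis(x_k;\delta_k)} = O(\delta_k)$, whence $\norm{r_k} \le N_2\,O(\delta_k)^2 = O(\delta_k^2)$; combined with $\norm{H_\delta^{-1}} \le K_2$ from observation~\ref{cons:7}, this yields $\norm{H_\delta^{-1} r_k} = O(\delta_k^2)$. Adding the two pieces gives $\norm{p_k - q_k} = O(\delta_k^2)$, as claimed.

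The load-bearing input is \cref{lem:grad}: the main obstacle is ensuring that the residual is genuinely $O(\delta_k^2)$ and not merely $O(\norm{\nabla\phis(x_k;\delta_k)}^2)$, and it is exactly the gradient estimate of \cref{lem:grad} that converts the latter into the former. Everything else is bookkeeping with the uniform bounds $K_1,\dots,K_4$; the only point requiring a word of justification is the legitimacy of the resolvent identity, which holds because $H_\delta$ is invertible with $\norm{H_\delta^{-1}} \le K_2$ throughout $\Bprime(\xstar)\times[0,\deltaprime]$ by observation~\ref{cons:7}.
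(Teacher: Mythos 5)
Your proof is correct, and its skeleton coincides with the paper's: both characterize $p_k$ as an inexact-Newton step via \cref{lem:inexact-newton} with $\tau=2$, expand the regularized gradient and Hessian through \eqref{eq:perturb-phi}, convert the residual bound into $O(\delta_k^2)$ with \cref{lem:grad}, and close with the uniform constants of \cref{lem:consequences}. The genuine difference is the decomposition. The paper premultiplies the inexact-Newton equation by $\nabla^2\phis(x_k)^{-1}$, which leaves the term $\delta_k^2\,\nabla^2\phis(x_k)^{-1}\nabla^2 P_{\delta_k}(x_k)\,p_k$ in play, and it controls that term by invoking \cref{lem:step-size} ($\norm{p_k}=O(\delta_k)$; boundedness of $\norm{p_k}$ would in fact suffice). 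You instead solve both linear systems explicitly and compare the two inverses through the resolvent identity, so that $\nabla^2 P_{\delta_k}(x_k)$ ends up acting on $\nabla^2\phis(x_k;\delta_k)^{-1}\nabla\phis(x_k)$, whose norm is bounded by $K_2 K_1$ via observation~\ref{cons:7} of \cref{lem:consequences}; consequently \cref{lem:step-size} is never needed. Your uses of observations~\ref{cons:7}--\ref{cons:8} at both $\delta=0$ and $\delta=\delta_k$ are legitimate, since those bounds hold for all $0\le\delta\le\deltaprime$ (the paper itself uses the $\delta=0$ case of observation~\ref{cons:7} to bound $\norm{\nabla^2\phis(x_k)^{-1}}$), and the resolvent identity is justified because both Hessians are uniformly invertible on $\Bprime(\xstar)\times[0,\deltaprime]$. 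What each route buys: yours is self-contained modulo \cref{lem:grad} and the uniform constants---indeed, since \cref{lem:step-size} is used nowhere else in the paper, your argument would allow that lemma to be dropped entirely---while the paper's keeps the operator algebra lighter by never forming $\nabla^2\phis(x_k;\delta_k)^{-1}$ against $\nabla^2\phis(x_k)^{-1}$.
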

\begin{proof}
According to \ref{ass:quad-conv}, we may apply \cref{lem:inexact-newton} with $\tau = 2$ and view step 4 of \cref{alg:regularized} as an inexact-Newton step, i.e.,
\begin{subequations}
  \begin{align}
    \nabla^2 \phis(x_k;\delta_k) p_k & = -\nabla \phis(x_k;\delta_k) + r_k,
    \label{eq:inexact-newton-tau=2}
    \\ \norm{r_k} &= O(\norm{\nabla \phis(x_k; \delta_k)}^2). \label{eq:rnorm-tau=2}
  \end{align}
\end{subequations}
We premultiply~\eqref{eq:inexact-newton-tau=2} by $\nabla^2 \phis(x_k)^{-1}$ and use~\eqref{eq:perturb-phi} to obtain
\begin{align*}
    % \nabla^2 \phis(x_k;\delta_k) p_k &= -\nabla \phis(x_k;\delta_k) + r_k
% \\  
    % \nabla^2 \phis(x_k)^{-1} \nabla^2 \phis(x_k;\delta_k) p_k &= \nabla^2 \phis(x_k)^{-1} \left( -\nabla \phis(x_k;\delta_k) + r_k \right)
% \\  
    p_k + \delta_k^2 \nabla^2 \phis(x_k)^{-1} \nabla^2 P_{\delta_k}(x_k) p_k &= q_k + \delta_k^2 \nabla^2 \phis(x_k)^{-1} \nabla P_{\delta_k}(x_k) + \nabla^2 \phis(x_k)^{-1} r_k.
\end{align*}
\cref{lem:grad}, \cref{lem:step-size} and the triangle inequality then yield
\begin{align*}
    \norm{p_k - q_k} &= \left\| \delta_k^2 \nabla^2 \phis(x_k)^{-1}
                                \left( \nabla P_{\delta_k}(x_k) - \nabla^2 P_{\delta_k}(x_k) p_k \right)
                              + \nabla^2 \phis(x_k)^{-1} r_k \right\|
\\  &\le \delta_k^2 \norm{\nabla^2 \phis(x_k)^{-1}}
                    \left( \norm{\nabla P_{\delta_k}(x_k)}
                         + \norm{\nabla^2 P_{\delta_k}(x_k) p_k} \right)
                         + \norm{\nabla^2 \phis(x_k)^{-1} r_k}
\\  &\le \delta_k^2 K_2 \left( K_3 + O(\delta_k) \right) + O(\delta_k^2) = O(\delta_k^2).
\end{align*}
\end{proof}

% \enlargethispage{1\baselineskip}

Using the previous technical results, we are in position to establish our main result.
\begin{proof}[Proof of \cref{thm:regularized}]
We show that for $x_0 \in \Bprime(\xstar)$ we achieve R-quadratic convergence, by showing that
$\norm{x_k - \xstar} = O(\delta_k)$
and that $\delta_k \rightarrow 0$ quadratically.
By observation~\ref{cons:9} of \cref{lem:consequences}, \eqref{eq:perturb-phi}, the triangle inequality, \cref{lem:grad}, and observation~\ref{cons:8} of \cref{lem:consequences}, we have
\begin{align*}
    \norm{x_k - \xstar} &\le K_5 \norm{\nabla \phis(x_k)}
\\  &=   K_5  \norm{\nabla \phis(x_k; \delta_k)  - \delta_k^2       \nabla P_{\delta_k}(x_k)}
\\  &\le K_5 (\norm{\nabla \phis(x_k; \delta_k)} + \delta_k^2 \norm{\nabla P_{\delta_k}(x_k)})
\\  &\le K_5 \left( O(\delta_k) + \delta_k^2 K_3 \right) = O(\delta_k).
\end{align*}
Let $q_k$ be the Newton direction defined in \cref{lem:newton-dir}.
There exists a constant $N > 0$ such that
\begin{align*}
   \norm{x_{k+1} - \xstar} &=    \norm{x_k + p_k - \xstar}
\\                         &\le  \norm{x_k + q_k - \xstar} + \norm{p_k - q_k}
\\                         &\le N\norm{x_k - \xstar}^2 + \norm{p_k - q_k} = O(\delta_k^2).
\end{align*}
It remains to show that $\delta_k$ decreases quadratically.
If $\norm{\nabla \phis(x_{k+1}, \delta_k)} \le \delta_k^2$,
\[
    \delta_{k+1} =
    \max \{ \min \{\norm{\nabla \phis(x_{k+1}, \delta_k)}, \delta_k \}, \delta_k^2 \} \le
    \max \{ \norm{\nabla \phis(x_{k+1}, \delta_k)}, \delta_k^2 \} = \delta_k^2.
\]
Assume now that $\norm{\nabla \phis(x_{k+1}, \delta_k)} > \delta_k^2$.
We have from~\eqref{eq:perturb-phi} and observations~\ref{cons:7}--\ref{cons:8} of \cref{lem:consequences} that
\begin{align*}
    \delta_{k+1} &= \max \{ \min \{ \norm{\nabla \phis(x_{k+1}, \delta_k)}, \delta_k \}, \delta_k^2 \}
\\  &\le \norm{\nabla \phis(x_{k+1}, \delta_k)}
\\  &\le \norm{\nabla \phis(x_{k+1})} + \delta_k^2 \norm{\nabla P_{\delta_k} (x_{k+1})}
\\  &\le K_2^{-1} \norm{x_{k+1} - \xstar} + \delta_k^2 K_3 = O(\delta_k^2).
\end{align*}
Thus we have $\norm{x_k - \xstar} = O(\delta_k)$ and $\delta_{k+1} = O(\delta_k^2)$, which means that $x_k \rightarrow \xstar$ R-quadratically.
\end{proof}

\section*{Acknowledgements}
We would like to express our deep gratitude to Drew Kouri for supplying PDE-constrained optimization problems in Matlab, for helpful discussions throughout this project, and for hosting the first author for two weeks at Sandia National Laboratories.
We are also extremely grateful to the reviewers for their unusually careful reading and their many helpful questions and suggestions.

\frenchspacing
\small
\bibliographystyle{abbrvnat}  %{siamplain} %
\bibliography{fletcher}
\normalsize

% \newpage
% \tableofcontents
% \listoftodos\relax

\end{document}